\documentclass[a4paper,11pt]{article}

\usepackage{amsmath}
\usepackage{amssymb}
\usepackage{amsthm}
\usepackage{amsfonts}
\usepackage{appendix}
\usepackage{mathtools}
\usepackage{ascmac}
\usepackage{spalign}
\usepackage{fancybox}
\usepackage[mathscr]{eucal}
\usepackage{footnpag}
\usepackage{siunitx}
\usepackage{multicol}
\usepackage{bbm}
\usepackage{framed}
\usepackage{here}

\usepackage{wrapfig}
\usepackage{float}

\theoremstyle{definition}
\newtheorem{theorem}{Theorem}[section]
\newtheorem*{theorem*}{Theorem}

\newtheorem*{definition*}{Definition}
\newtheorem{proposition}{Proposition}[section]
\newtheorem*{proposition*}{Proposition}
\newtheorem{lemma}{Lemma}[section]
\newtheorem*{lemma*}{Lemma}
\newtheorem{corollary}{Corollary}[section]
\newtheorem*{corollary*}{Corollary}
\newtheorem{remark}{Remark}[section]
\newtheorem*{remark*}{Remark}

\newtheorem*{example*}{Example}

\theoremstyle{example}

\newfont{\bg}{cmr9 scaled\magstep4}
\newcommand{\bigzero}{
\smash{\lower1.0ex\hbox{\bg 0}}}

\newcommand{\ctext}[1]{\raise0.2ex\hbox{\textcircled{\scriptsize{#1}}}}

\DeclareMathOperator{\Kf}{Kf}

\title{Effective Resistance and Generalized Bejaia--Pisa Sequences
on Complete Graphs with Circulant Distance Deletions}
\author{Shunya Tamura\thanks{Okegawa City Okegawa West Junior High School, Saitama, 363-0027, Japan, e-mail: shunya.tamura059@gmail.com}, 
Yuuho Tanaka \thanks{Faculty of Science and Technology, Oita University, Oita, 870-1192, Japan, e-mail: tanaka-yuuho@oita-u.ac.jp }
}

\date{}
\begin{document}
\maketitle

\begin{abstract}

In this paper, we investigate the effective resistance on the graph $G_N^{(r)}$, which is obtained by deleting all edges corresponding to circular distances $\{\pm1, \pm2, \dots, \pm r\}$ from the complete graph $K_N$.
We utilize the cyclic symmetry of the graph to diagonalize the Laplacian matrix via the discrete Fourier basis and derive a finite trigonometric sum representation for the effective resistance between two vertices at distance $\ell$.

Specifically, we treat the cases $r=1$ and $r=2$ in detail and provide explicit formulas.
For the case of $r=1$, we use Fourier analysis to rederive the closed form in terms of Bejaia and Pisa numbers given by Chair.
For the case of $r=2$, we show that the denominator reduces to a quadratic polynomial with complex roots and introduce a generalized Bejaia--Pisa-type complex sequence.
Using this sequence, we provide some closed forms for the effective resistance and various related formulas.

\end{abstract}

\noindent
{\bf Keywords:} effective resistance, hitting time, circulant graphs, random walk

\noindent
{\bf 2020 Mathematical Subject Classification:} 05C50, 05C12, 05C81. 

\section{Introduction}

The effective resistance of a graph is a basic quantity in electrical circuit theory, random walks, stochastic processes, and spectral graph theory and is defined via the Moore--Penrose pseudoinverse of the Laplacian matrix. 
For highly symmetric graphs in particular, it is known that the effective resistance is deeply related to specific sequences and trigonometric sums \cite{Nash-Williams1959,Tetali1991,Wu2004}.

Chair \cite{Chair} analyzed the effective resistance and the average hitting time of the graph obtained by deleting $N$ edges from the complete graph and showed that their formulas can be described by Fibonacci-type sequences called Bejaia and Pisa numbers. 
Miezaki and Tamura \cite{MT2026} provided explicit formulas for the average hitting time of the $k$-th power graph of a cycle by using sequences that satisfy a three-term recurrence relation via discrete Fourier analysis.
Moreover, Tamura \cite{Tamura2026} provided explicit formulas for the effective resistance and average hitting time of graphs obtained by removing edges at distance $\pm r$ from the complete graph. 
These results serve as representative examples demonstrating that a rich structure emerges between effective resistances and integer sequences in graphs obtained by periodically deleting edges from the complete graph.

In this paper, we consider the graph $G_N^{(r)}$, which is obtained by deleting all edges corresponding to the circular distances $\{\pm1,\pm2,\dots,\pm r\}$ from the complete graph $K_N$.
Specifically, the case of $r=1$ represents the graph 
\[
G_N^{(1)} = K_N \setminus C_N
\]
obtained by removing the edges of the cycle $C_N$ from the complete graph $K_N$. 
For the odd-vertex case, this essentially coincides with the graph studied by Chair \cite{Chair}. 
Therefore, we use Fourier analysis to rederive the closed formula given by Chair \cite{Chair} for the case of $r=1$ and derive a new closed form for the case of $r=2$.

The Laplacian matrix of the graph $G_N^{(r)}$ can be diagonalized via the discrete Fourier basis due to its cyclic symmetry.
We use this diagonalization to obtain the formula for the effective resistance between two vertices at distance $\ell$ for any $r$.

For the case of $r=1$, we rederive the trigonometric sum representation given by Chair from this general formula and show that the closed form in terms of Bejaia and Pisa numbers emerges as a natural consequence of Fourier and complex analysis.

Furthermore, as the main result of this paper, we introduce new Fibonacci-type sequences corresponding to the case of $r=2$ and provide closed forms for the effective resistance and average hitting time.

For $r=1$, the characteristic equation has real roots, so the Bejaia and Pisa numbers appear as real sequences. 
However, for $r=2$, the characteristic equation has complex roots, and the corresponding sequences are naturally extended to complex sequences. 
Consequently, the effective resistance for $r=2$ is described using these generalized Bejaia--Pisa-type sequences. 
This provides a new sequence structure that emerges when multiple distance classes are removed from the complete graph.

\begin{theorem}[Case: $r=2$]\label{thm:r2-chair}
Let $N\ge 7$ be an odd number and $r$ be an integer such that $1\le \ell\le \frac N2-1$.

Then, the effective resistance between two vertices $0$ and $\ell$ of the graph $G_N^{(2)}$ is
\[
R_{N,2}(\ell)
=
\frac{4}{\sqrt{4N-25}}
\Im\left(
C_N
-
\frac{C_N}{2}\mathcal P_\ell^{(2)}
-
\frac12\mathcal B_\ell^{(2)}
\right),
\]
where $\Im(\cdot)$ denotes the imaginary part, $\mathcal P_\ell^{(2)}$ is the $\ell$-th Pisa number for $r=2$, and $\mathcal B_\ell^{(2)}$ is the $\ell$-th Bejaia number for $r=2$. 
Furthermore, $C_N$ is defined as
\[
C_N
:=
\frac{
2+\mathcal P_N^{(2)}+d\mathcal B_N^{(2)}
}{
d\left(
2-\mathcal P_N^{(2)}-d\mathcal B_N^{(2)}
\right)
}.
\]
\end{theorem}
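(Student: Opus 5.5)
The plan is to start from the general trigonometric-sum formula for $R_{N,r}(\ell)$ obtained earlier via the discrete Fourier diagonalization, specialize it to $r=2$, and evaluate the finite sum in closed form with a partial-fraction and resolvent argument. Throughout I assume the paper's convention for the $r=2$ sequences: $z$ is a root of $z^2-\alpha z+1=0$, $d=z-z^{-1}$, $\mathcal P_n^{(2)}=z^n+z^{-n}$ and $\mathcal B_n^{(2)}=(z^n-z^{-n})/d$. Under this convention $z^n=\tfrac12(\mathcal P_n^{(2)}+d\mathcal B_n^{(2)})$.

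\emph{Eigenvalues and the sum.} Write $\theta_k=2\pi k/N$. Removing the distance classes $\pm1,\pm2$ from $K_N$ gives Laplacian eigenvalues
\[
\lambda_k=N-4+2\cos\theta_k+2\cos2\theta_k .
\]
Put $x=2\cos\theta_k$, so that $2\cos 2\theta_k=x^2-2$ and $\lambda_k=x^2+x+N-6$. The roots of this quadratic are
\[
\alpha,\bar\alpha=\frac{-1\pm i\sqrt{4N-25}}{2},
\]
which is the source of the factor $\sqrt{4N-25}$. The general formula gives
\[
R_{N,2}(\ell)=\frac{2}{N}\sum_{k=1}^{N-1}\frac{1-\cos\ell\theta_k}{\lambda_k}.
\]
By partial fractions, $\frac1{\lambda_k}=\frac{1}{\alpha-\bar\alpha}\bigl(\frac1{x-\alpha}-\frac1{x-\bar\alpha}\bigr)$ with $\alpha-\bar\alpha=i\sqrt{4N-25}$. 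Since the two terms are complex conjugates, this becomes
\[
\frac{1}{\lambda_k}=\frac{2}{\sqrt{4N-25}}\,\Im\frac{1}{x-\alpha}.
\]
So it suffices to evaluate $S(\ell)=\sum_{k=0}^{N-1}\frac{1-\cos\ell\theta_k}{\alpha-2\cos\theta_k}$, up to sign. Including $k=0$ is harmless because its numerator vanishes.

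\emph{Resolvent sum.} Write $\alpha=z+z^{-1}$, choosing the root with $|z|\neq1$; this is possible since $\alpha\notin[-2,2]$. I then use the standard cycle-resolvent identity, proved by expanding $1/(z+z^{-1}-2\cos\theta)$ as a geometric series in $z^{\pm1}e^{\pm i\theta}$ and summing over the $N$-th roots of unity:
\[
\sum_{k=0}^{N-1}\frac{\cos\ell\theta_k}{z+z^{-1}-2\cos\theta_k}=\frac{N}{z^{-1}-z}\cdot\frac{z^{\ell}+z^{N-\ell}}{1-z^N},\qquad 0\le\ell\le N.
\]
Taking the difference of the cases $0$ and $\ell$ gives
\[
S(\ell)=\frac{N}{z^{-1}-z}\Bigl(\frac{1+z^N}{1-z^N}-\frac{z^\ell+z^{N-\ell}}{1-z^N}\Bigr).
\]
Next I rewrite $\frac{z^\ell+z^{N-\ell}}{1-z^N}$ as a combination of $\frac{1+z^N}{1-z^N}\cdot\frac{z^\ell+z^{-\ell}}{2}$ and $\frac{z^{\ell}-z^{-\ell}}{2}$. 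This uses $z^{N-\ell}=z^N z^{-\ell}$ together with the relation $\frac{z^N}{1-z^N}=\frac12\bigl(\frac{1+z^N}{1-z^N}-1\bigr)$ and its analogue for $\frac{1}{1-z^N}$.

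\emph{Conversion to the sequences.} Substituting $z^n=\frac12(\mathcal P_n^{(2)}+d\mathcal B_n^{(2)})$ gives
\[
\frac{1+z^N}{1-z^N}=\frac{2+\mathcal P_N^{(2)}+d\mathcal B_N^{(2)}}{2-\mathcal P_N^{(2)}-d\mathcal B_N^{(2)}},
\]
and the prefactor $1/(z^{-1}-z)=-1/d$ supplies the $d$ in the denominator of $C_N$. The bracket then becomes $C_N-\frac{C_N}{2}\mathcal P_\ell^{(2)}-\frac12\mathcal B_\ell^{(2)}$, after absorbing the factor $d$ into $\mathcal B_\ell^{(2)}$. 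Multiplying by $\frac2N\cdot\frac{2}{\sqrt{4N-25}}$ and taking the imaginary part then yields the stated formula.

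\emph{Main obstacle.} I expect the hardest part to be bookkeeping rather than anything conceptual. One issue is keeping signs and the choice of root $z$ versus $z^{-1}$ consistent, since $\Im$ is not invariant under conjugation. The other is checking that the normalization of $d$ and of the sequences matches the paper's definitions exactly. I would verify the final identity numerically for $N=7$, $\ell=1,2$ against the direct trigonometric sum. The hypotheses that $N$ is odd and $\ell\le N/2-1$ matter only for matching the range of validity of the resolvent identity and the earlier general formula; they do not change the argument.
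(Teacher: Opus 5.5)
Your proposal is correct and is essentially the paper's proof. Partial fractions in $t=2\cos\theta$ reduce $R_{N,2}(\ell)$ to $\frac{4}{\sqrt{4N-25}}\Im$ of a resolvent sum. That sum is evaluated through $\tau=\rho+\rho^{-1}$ and an $N$-th-roots-of-unity identity, and the result is rewritten via $\rho^n=\frac12(\mathcal P_n^{(2)}+d\mathcal B_n^{(2)})$. The only difference is that the paper obtains the resolvent identity from the algebraic partial-fraction formula $\frac{z^{k-1}}{z^N-1}=\frac1N\sum_{m}\frac{\omega^{mk}}{z-\omega^m}$, whereas you use the geometric-series/aliasing argument the paper uses for $r=1$.

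Your two sign flips ($\alpha-x$ versus $x-\alpha$, and $z^{-1}-z=-d$) do cancel. Moreover, $\mathcal P_k^{(2)}$, $\mathcal B_k^{(2)}$ and $C_N$ are all invariant under $\rho\leftrightarrow\rho^{-1}$, so the root choice you worry about is harmless. The only choice that affects the sign of the imaginary part is $\alpha$ versus $\bar\alpha$ (the paper's $\tau$ versus $\overline{\tau}$), and you fix it the same way the paper does.
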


The formula for the effective resistance obtained in this paper immediately yields explicit formulas for the Kirchhoff index, the number of spanning trees, and the average hitting time of a simple random walk.

The organization of this paper is as follows.
In Section \ref{sec02}, we give the definition and basic properties of the graph $G_N^{(r)}$.
In Section \ref{sec03}, we use the diagonalization of the Laplacian matrix via the discrete Fourier basis to derive finite sum representations for the effective resistance and average hitting time. 
In Section \ref{sec04}, we consider the cases of $r=1$ and $r=2$, deriving closed forms for the effective resistance, average hitting time, Kirchhoff index, and the number of spanning trees. 
For $r=1$, we give closed forms in terms of Bejaia and Pisa numbers and assess the correspondence with Chair's results. 
For $r=2$, we introduce generalized Bejaia--Pisa-type sequences and derive the closed forms using them. 
Finally, we provide concluding remarks.

\section{Preliminary} \label{sec02}

In this section, we present the definition of the graph considered in this paper along with its distance structure and symmetry, which are necessary for calculating the effective resistance and various byproducts later. 
Throughout the remainder of this paper, we assume that the number of vertices $N\ge 7$ is odd.

Let $V=\mathbb{Z}/N\mathbb{Z}$ be the vertex set. 
Assume that $r$ is an integer satisfying $1 \leq r \leq (N-3)/2$. 
We define the graph $G_N^{(r)}$ as the one obtained by deleting all edges corresponding to the circular distances $\{\pm1,\pm2,\dots,\pm r\}$ from the complete graph $K_N$. 
That is, two distinct vertices $x, y \in V$ are adjacent in $G_N^{(r)}$ if and only if their circular distance satisfies
\[
d(x,y)\in\{r+1,r+2,\dots,(N-1)/2\}.
\]
The condition $r \leq (N-3)/2$ is required to ensure that $G_N^{(r)}$ is a non-trivial connected graph.
Indeed, if $r=(N-1)/2$, all edges between distinct vertices would be deleted, yielding an edgeless discrete graph.
According to its definition, $G_N^{(r)}$ is a regular graph with each vertex having a constant degree of $\deg(G_N^{(r)})=N-1-2r$.
For the case of $r=1$ in particular, the graph becomes $G_N^{(1)} = K_N \setminus C_N$, which coincides with the graph studied by Chair \cite{Chair}. 
Moreover, for the case of $r=2$, it represents the graph obtained by removing the edges corresponding to the circular distances $\pm1$ and $\pm2$ from the complete graph.

In this paper, we focus particularly on the Fibonacci-type sequence structures arising for $r=1$ and $r=2$.
For vertices $x,y\in V$, the circular distance is defined as
\[
d(x,y)=\min\{|x-y|,\,N-|x-y|\}.
\]
Since $N$ is odd, the maximum distance is $q=(N-1)/2$, so $d(x,y)\in\{0,1,2,\dots,q\}$ holds.
We fix the vertex $0\in V$ as the base point. 
For each distance $k=0,1,\dots,q$, the set $D_k=\{x\in V\mid d(0,x)=k\}$ is called a distance class. 
Since there exist two distinct vertices $x=\pm k \pmod N$ for each distance $k\ge1$, the size of each distance class is
\[
|D_0|=1,\qquad
|D_k|=2
\quad(1\le k\le q).
\]
The graph $G_N^{(r)}$ is invariant under the translation action $x\mapsto x+a$ and the inversion map $x\mapsto -x$ of the cyclic group $\mathbb{Z}/N\mathbb{Z}$. 
Therefore, $G_N^{(r)}$ is a vertex-transitive circulant graph.
Due to this high symmetry, the effective resistance and the average hitting time depend only on the circular distance between two vertices, rather than on the vertices themselves.
Specifically, for any $x,y\in V$, we have $R(x,y)=R(0,d(x,y))$.
Therefore, in what follows, we denote the effective resistance between two vertices at distance $\ell$ by $R(0,\ell)$.

Furthermore, due to the cyclic symmetry of $G_N^{(r)}$, its Laplacian matrix can be diagonalized via the discrete Fourier basis. 
Therefore, we can express the effective resistance as a finite trigonometric sum and subsequently derive the Fibonacci-type sequence structures akin to Bejaia and Pisa numbers for $r=1$ and $r=2$.

\begin{remark}
If $N$ is even, the maximum distance is $N/2$, and the distance class corresponding to $N/2$ consists of only a single vertex.
Therefore, a self-symmetric term appears when simplifying the sum using the symmetry of $m \leftrightarrow N-m$.

We provide supplementary details for the case of even $N$ in Appendix \ref{app:evenN}.
\end{remark}

\section{General Representation for Effective Resistance} \label{sec03}

In this section, we diagonalize the Laplacian matrix using the discrete Fourier basis, derive a finite-sum representation of the effective resistance, and obtain Theorem \ref{mainthm1}.

\begin{lemma}\label{lem31}

We identify the vertex set with $V=\mathbb{Z}/N\mathbb{Z}$ and let $\mathbb{C}^N$ denote the space of all complex-valued functions on $V$.
Let $\omega:=e^{2\pi i/N}$.
For each $m=0,1,\dots,N-1$, we define
\[
\varphi_m(x):=\frac{1}{\sqrt{N}}\omega^{mx} \quad (x\in V).
\]
Then, $\{\varphi_m\}_{m=0}^{N-1}$ forms an orthonormal basis for $\mathbb{C}^N$ with respect to the standard inner product
\[
\langle f,g\rangle=\sum_{x\in V} f(x)\overline{g(x)}.
\]
\end{lemma}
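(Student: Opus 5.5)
The plan is to verify orthonormality directly and then conclude the basis property by a dimension count. Orthonormality reduces to a finite geometric sum of roots of unity.

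First we compute the inner products. For $m,n\in\{0,1,\dots,N-1\}$ we have
\[
\langle \varphi_m,\varphi_n\rangle=\frac1N\sum_{x=0}^{N-1}\omega^{mx}\overline{\omega^{nx}}=\frac1N\sum_{x=0}^{N-1}\omega^{(m-n)x}.
\]
Here we use $|\omega|=1$, so $\overline{\omega^{nx}}=\omega^{-nx}$. Note that $\varphi_m$ is well defined on $\mathbb{Z}/N\mathbb{Z}$, because $\omega^N=1$ makes $\omega^{mx}$ depend only on $x \bmod N$. We then split into two cases.

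\begin{itemize}
\item If $m=n$, every summand equals $1$, and the inner product is $1$.
\item If $m\neq n$, put $k=m-n$. Then $0<|k|<N$, so $N\nmid k$ and $\omega^k\neq 1$, since $\omega$ is a primitive $N$-th root of unity. The geometric series formula gives
\[
\sum_{x=0}^{N-1}\omega^{kx}=\frac{\omega^{kN}-1}{\omega^k-1}=0,
\]
because $\omega^{kN}=(\omega^N)^k=1$.
\end{itemize}

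Hence $\langle\varphi_m,\varphi_n\rangle=\delta_{mn}$.

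Orthonormal vectors are linearly independent: pair any vanishing linear combination with each $\varphi_n$ to see that its coefficients vanish. The $N$ functions $\varphi_0,\dots,\varphi_{N-1}$ therefore span an $N$-dimensional subspace of $\mathbb{C}^N$, which has dimension $N$. They thus form a basis, indeed an orthonormal basis.

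The only point needing care is the case $m\neq n$: one must confirm that $\omega^{m-n}\neq1$, which follows from primitivity of $\omega$ together with $0<|m-n|<N$. Nothing else is expected to be difficult.
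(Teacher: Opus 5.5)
Your proposal is correct and follows the paper's proof essentially verbatim: it reduces $\langle\varphi_m,\varphi_{m'}\rangle$ to $\frac1N\sum_{x}\omega^{(m-m')x}$, evaluates this by the geometric series using $\omega^{m-m'}\neq 1$ for $m\neq m'$, and concludes by counting $N$ orthonormal vectors in an $N$-dimensional space. Your added remarks on well-definedness on $\mathbb{Z}/N\mathbb{Z}$ and on why orthonormal vectors are linearly independent make explicit what the paper leaves implicit.
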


\begin{proof}
For $m,m'=0,1,\dots,N-1$, we have
\[
\langle \varphi_m,\varphi_{m'}\rangle
=
\frac{1}{N}\sum_{x=0}^{N-1}\omega^{(m-m')x}.
\]

If $m=m'$, the right-hand side evaluates to $1$.
Conversely, if $m\ne m'$, then $\omega^{m-m'}\ne1$, yielding
\[
\sum_{x=0}^{N-1}\omega^{(m-m')x}
=
\frac{1-\omega^{(m-m')N}}{1-\omega^{m-m'}}
=0.
\]
Therefore, $\{\varphi_m\}_{m=0}^{N-1}$ is an orthonormal system.
Since the number of these vectors equals $\dim \mathbb{C}^N=N$, they form an orthonormal basis.
\end{proof}

\begin{proposition}\label{prop32}
Let $L$ be the Laplacian matrix of the graph $G_N^{(r)}$.
Then, we have 
\[
L\varphi_m=\lambda_m\varphi_m
\]
for $0\le m\le N-1$.
Moreover, the eigenvalues are given by
\[
\lambda_m=
\begin{cases}
0, & m=0,\\[4pt]
(N-2r)+2\displaystyle\sum_{j=1}^{r}
\cos\left(\displaystyle\frac{2\pi mj}{N}\right),
& 1\le m\le N-1
\end{cases}.
\]
\end{proposition}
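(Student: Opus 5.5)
The plan is to compute $L\varphi_m$ pointwise. I write $L=D-A$, where $D=(N-1-2r)I$ because the graph is regular of degree $N-1-2r$. The adjacency matrix $A$ is circulant: $A_{xy}=1$ exactly when $y-x\in S:=\{\pm(r+1),\dots,\pm\frac{N-1}{2}\}\subset\mathbb Z/N\mathbb Z$. For any $x\in V$ this gives
\[
(A\varphi_m)(x)=\sum_{s\in S}\varphi_m(x+s)=\Bigl(\sum_{s\in S}\omega^{ms}\Bigr)\varphi_m(x).
\]
So each $\varphi_m$ is an eigenvector of $A$ with eigenvalue $\mu_m=\sum_{s\in S}\omega^{ms}$. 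Since $S=-S$, pairing $s$ with $-s$ shows $\mu_m=2\sum_{k=r+1}^{(N-1)/2}\cos(2\pi mk/N)$, which is real.

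The key step is to rewrite $\mu_m$ using the complement of $S$. The sets $\{0\}$, $\{\pm1,\dots,\pm r\}$ and $S$ partition $\mathbb Z/N\mathbb Z$; this uses that $N$ is odd, so the classes $\pm k$ for $1\le k\le (N-1)/2$ are disjoint and cover $V\setminus\{0\}$. Therefore
\[
\sum_{s\in S}\omega^{ms}=\sum_{x\in V}\omega^{mx}-1-2\sum_{j=1}^{r}\cos\!\left(\frac{2\pi mj}{N}\right).
\]
By the orthogonality computation in the proof of Lemma~\ref{lem31}, the full sum $\sum_{x}\omega^{mx}$ equals $N$ for $m=0$ and $0$ for $1\le m\le N-1$.

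The two cases then follow.

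\textbf{Case $m=0$.} Here $\mu_0=N-1-2r$, equal to the degree, so $\lambda_0=(N-1-2r)-(N-1-2r)=0$. This is also clear directly, since $\varphi_0$ is constant and the row sums of $L$ vanish.

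\textbf{Case $1\le m\le N-1$.} Here $\mu_m=-1-2\sum_{j=1}^r\cos(2\pi mj/N)$, so
\[
\lambda_m=(N-1-2r)-\mu_m=(N-2r)+2\sum_{j=1}^r\cos\!\left(\frac{2\pi mj}{N}\right),
\]
which is the claimed formula.

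No step is really an obstacle. The only point needing care is the partition of $\mathbb Z/N\mathbb Z$ into $0$, the deleted distances and $S$, which depends on $N$ being odd and on $r\le (N-3)/2$ so that $S$ is nonempty.
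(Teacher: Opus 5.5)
Your proof is correct and follows essentially the same route as the paper: compute $(L\varphi_m)(x)$ directly from the neighbour set $\{\pm(r+1),\dots,\pm\frac{N-1}{2}\}$, then rewrite the cosine sum over the kept distances as one over the deleted distances, using $\sum_{x\in V}\omega^{mx}=0$ for $m\neq 0$ and the oddness of $N$. The differences are cosmetic. The paper first uses the commuting shift operator $T$ to show in advance that $\varphi_m$ is an eigenvector, which your pointwise computation makes unnecessary. It also takes real parts of $\sum_x\omega^{mx}=0$, whereas you work with the complex sum directly.
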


\begin{proof}
We define the cyclic shift operator $T:\mathbb{C}^N\to\mathbb{C}^N$ by
\[
(Tu)(x):=u(x+1)
\qquad (x\in \mathbb{Z}/N\mathbb{Z}).
\]
Since the adjacency relation of $G_N^{(r)}$ depends only on the difference $y-x$, the Laplacian matrix $L$ commutes with the cyclic shift operator $T$. 
That is, $LT=TL$. 
Moreover, from
\[
(T\varphi_m)(x)
=\varphi_m(x+1)
=\frac{1}{\sqrt{N}}\omega^{m(x+1)}
=\omega^m\varphi_m(x)
\]
we obtain $T\varphi_m=\omega^m\varphi_m$.
Therefore, we have
\[
T(L\varphi_m)=L(T\varphi_m)=\omega^m L\varphi_m.
\]
Thus, $L\varphi_m$ belongs to the eigenspace of $T$ associated with the eigenvalue $\omega^m$. 
Since this eigenspace is one-dimensional and coincides with $\mathbb{C}\varphi_m$, there exists a scalar $\lambda_m$ such that $L\varphi_m=\lambda_m\varphi_m$.

For any function $u:V\to\mathbb{C}$,
\[
(Lu)(x)=\deg(x)u(x)-\sum_{y\sim x}u(y).
\]
In $G_N^{(r)}$, since the edges corresponding to the distances $\pm1,\dots,\pm r$ are deleted, the neighbors of a vertex $x$ are given by
\[
x\pm(r+1),\ x\pm(r+2),\dots,\ x\pm q,
\qquad q=\frac{N-1}{2}.
\]
Consequently, the degree is
\[
\deg(x)=2(q-r)=N-1-2r,
\]
yielding
\begin{equation}\label{eq:lap-action}
(Lu)(x)
=(N-1-2r)u(x)-\sum_{k=r+1}^{q}\{u(x+k)+u(x-k)\}.
\end{equation}

Let $u=\varphi_m$.
Then, since $\varphi_m(x\pm k)=\omega^{\pm mk}\varphi_m(x)$ holds,
\[
\varphi_m(x+k)+\varphi_m(x-k)
=
2\cos\left(\frac{2\pi mk}{N}\right)\varphi_m(x)
\]
holds.
Thus,
\[
(L\varphi_m)(x)
=
\left(
N-1-2r
-
2\sum_{k=r+1}^{q}
\cos\left(\frac{2\pi mk}{N}\right)
\right)\varphi_m(x).
\]
Therefore,
\begin{equation}\label{eq:eigen-before}
\lambda_m
=
N-1-2r
-
2\sum_{k=r+1}^{q}
\cos\left(\frac{2\pi mk}{N}\right).
\end{equation}

For $m\ne 0$, we have
\[
\sum_{x=0}^{N-1}\omega^{mx}=0.
\]
Taking the real part, we obtain
\[
1+2\sum_{k=1}^{q}
\cos\left(\frac{2\pi mk}{N}\right)=0.
\]
Therefore,
\[
\sum_{k=r+1}^{q}
\cos\left(\frac{2\pi mk}{N}\right)
=
-\frac12
-
\sum_{k=1}^{r}
\cos\left(\frac{2\pi mk}{N}\right).
\]
Substituting this into \eqref{eq:eigen-before}, we obtain
\[
\lambda_m
=
(N-2r)
+
2\sum_{k=1}^{r}
\cos\left(\frac{2\pi mk}{N}\right).
\]

For $m=0$, since $\varphi_0$ is a constant vector, we have $L\varphi_0=0$.
Thus, $\lambda_0=0$.
\end{proof}

\begin{lemma}\label{lem33}
Let $L^+$ be the Moore--Penrose pseudoinverse of the Laplacian matrix $L$.
Then,
\[
L^+
=
\sum_{m=1}^{N-1}
\frac{1}{\lambda_m}\,
\varphi_m\varphi_m^{*}.
\]
\end{lemma}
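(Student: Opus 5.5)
The plan is to verify that the proposed operator $M:=\sum_{m=1}^{N-1}\lambda_m^{-1}\varphi_m\varphi_m^{*}$ satisfies the four Penrose conditions. By uniqueness of the Moore--Penrose pseudoinverse, this gives $M=L^+$.

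First we need $\lambda_m\neq 0$ for $1\le m\le N-1$, so that $M$ is well defined. Two routes are available.

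\emph{Graph-theoretic route.} $G_N^{(r)}$ is connected for $r\le (N-3)/2$, as noted in Section \ref{sec02}. The kernel of $L$ is then spanned by the constants, i.e.\ by $\varphi_0$. By Proposition \ref{prop32} and Lemma \ref{lem31}, the $\varphi_m$ are an orthonormal eigenbasis. Hence $\lambda_m=0$ for some $m\ge1$ would give a second, independent kernel vector, which is impossible. This connectedness step is the only real subtlety of the proof.

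\emph{Direct route.} Equation \eqref{eq:eigen-before} writes $\lambda_m$ as $\sum_{k=r+1}^{q}2\bigl(1-\cos(2\pi mk/N)\bigr)$. Each term is nonnegative, and the term with $k=r+1$ vanishes only if $N\mid m(r+1)$. If that happens, look also at $k=r+2$ when $r+2\le q$. If both $N\mid m(r+1)$ and $N\mid m(r+2)$, then $N\mid m$, a contradiction. So $\lambda_m>0$.

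Next, use the spectral decomposition. By Lemma \ref{lem31}, writing $U=[\varphi_0,\dots,\varphi_{N-1}]$ gives a unitary matrix with $I=\sum_{m=0}^{N-1}\varphi_m\varphi_m^{*}$. By Proposition \ref{prop32}, $L=U\,\mathrm{diag}(\lambda_0,\dots,\lambda_{N-1})\,U^{*}=\sum_{m=1}^{N-1}\lambda_m\varphi_m\varphi_m^{*}$. The projectors $P_m:=\varphi_m\varphi_m^{*}$ satisfy $P_mP_{m'}=\delta_{mm'}P_m$ and $P_m^{*}=P_m$. Therefore
\[
LM=ML=\sum_{m=1}^{N-1}P_m=I-P_0,
\]
which is Hermitian. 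It follows that $LML=L(I-P_0)=L$, since $LP_0=0$, and $MLM=(I-P_0)M=M$, since $P_0P_m=0$ for $m\ge1$. These are the four Penrose identities, so $M=L^+$.

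One could equivalently note that $L^+=U\Lambda^{+}U^{*}$, where $\Lambda^{+}$ inverts the nonzero diagonal entries, and expand in columns. Either way, the computation is routine once nonvanishing of the $\lambda_m$ for $m\ge1$ is secured.
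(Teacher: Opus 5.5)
Your proof is correct and follows essentially the paper's route: expand $L$ in the Fourier eigenbasis and invert the nonzero eigenvalues. You add value by checking the four Penrose identities explicitly and by proving $\lambda_m\neq0$ for $m\ge1$, which the paper tacitly assumes.

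One small hole: your direct route does not cover $r=(N-3)/2$. There $r+2>q$, so the only term is $k=q$ and $\lambda_m=2\bigl(1-\cos(2\pi mq/N)\bigr)$. This is still positive, since $2q=N-1\equiv-1\pmod N$ gives $\gcd(q,N)=1$, so $N\nmid mq$. Your graph-theoretic route already covers this case.
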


\begin{proof}
From Lemma \ref{lem31} and Proposition \ref{prop32}, we have
\[
L=\sum_{m=0}^{N-1}\lambda_m\varphi_m\varphi_m^{*}.
\]
The Moore--Penrose inverse is obtained by replacing each non-zero eigenvalue with its reciprocal, omitting the component corresponding to the zero eigenvalue.
Thus, it follows that
\[
L^+
=\sum_{m=1}^{N-1}\frac{1}{\lambda_m}\,\varphi_m\varphi_m^{*}.
\]
\end{proof}

Next, we introduce the effective resistance.
Let $G=(V,E)$ be a connected undirected graph, 
with $L$ being its Laplacian matrix and $L^+$ its Moore--Penrose inverse.
Let $e_x$ denote the standard basis vector corresponding to a vertex $x\in V$. 
Then, for any vertices $x,y\in V$, the effective resistance is defined as
\[
R(x,y)
:=
(e_x-e_y)^*L^+(e_x-e_y).
\]

\begin{proposition}\label{prop31}
The effective resistance between two vertices $0$ and $\ell$ at distance $\ell$ of the graph $G_N^{(r)}$ is given by
\[
R(0,\ell)
=
\frac{2}{N}
\sum_{m=1}^{N-1}
\frac{
1-\cos\left(\frac{2\pi m\ell}{N}\right)
}{
\lambda_m
}.
\]
\end{proposition}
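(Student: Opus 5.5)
The plan is to substitute the spectral decomposition of Lemma~\ref{lem33} into the definition
\[
R(0,\ell)=(e_0-e_\ell)^*L^+(e_0-e_\ell),
\]
using the eigenvalues from Proposition~\ref{prop32}. This gives
\[
R(0,\ell)=\sum_{m=1}^{N-1}\frac{1}{\lambda_m}\bigl|\langle e_0-e_\ell,\varphi_m\rangle\bigr|^2 ,
\]
since $(e_0-e_\ell)^*\varphi_m\varphi_m^*(e_0-e_\ell)=|\varphi_m^*(e_0-e_\ell)|^2$. Note that each $\lambda_m$ is real, because $L$ is real symmetric and $\lambda_m$ is given by a real cosine expression in Proposition~\ref{prop32}.

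Next I compute the coefficient. We have $\varphi_m^*(e_0-e_\ell)=\frac{1}{\sqrt N}\bigl(1-\omega^{-m\ell}\bigr)$, so
\[
\bigl|\varphi_m^*(e_0-e_\ell)\bigr|^2=\frac1N\,|1-\omega^{m\ell}|^2=\frac1N\bigl(2-\omega^{m\ell}-\omega^{-m\ell}\bigr)=\frac{2}{N}\Bigl(1-\cos\frac{2\pi m\ell}{N}\Bigr).
\]
Summing over $m=1,\dots,N-1$ then gives exactly the stated formula.

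The one point needing care is that $\lambda_m\neq0$ for $1\le m\le N-1$. Lemma~\ref{lem33} implicitly requires this, and it amounts to connectivity of $G_N^{(r)}$. I would justify it from the form of $L$: since $L$ is positive semidefinite, $\lambda_m\ge 0$, and $\lambda_m=0$ forces $\varphi_m$ to be constant on every edge. Because $r\le (N-3)/2$, the step $q=(N-1)/2$ is an edge length. Since $\gcd(q,N)=1$ (as $2q\equiv -1 \pmod N$), we get $\omega^{mq}=1$ only when $m\equiv0$. Hence the zero eigenvalue occurs only at $m=0$, and the pseudoinverse formula applies. Apart from this, the argument is a routine computation.
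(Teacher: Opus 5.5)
Your proposal is correct and takes essentially the same approach as the paper. You substitute the spectral decomposition of $L^+$ from Lemma~\ref{lem33} into $(e_0-e_\ell)^*L^+(e_0-e_\ell)$ and evaluate $|\varphi_m(0)-\varphi_m(\ell)|^2=\frac{2}{N}\bigl(1-\cos\frac{2\pi m\ell}{N}\bigr)$. Your extra check that $\lambda_m\neq 0$ for $1\le m\le N-1$, using the edge step $q=(N-1)/2$ with $\gcd(q,N)=1$, is valid and justifies a point the paper assumes through connectivity without proving it.
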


\begin{proof}
From lemma \ref{lem33} and the definition of the effective resistance,
\begin{align*}
R(x,y)
&=
(e_x-e_y)^*L^+(e_x-e_y)\\
&=
\sum_{m=1}^{N-1}
\frac{1}{\lambda_m}
\left|
(e_x-e_y)^*\varphi_m
\right|^2\\
&=
\sum_{m=1}^{N-1}
\frac{
|\varphi_m(x)-\varphi_m(y)|^2
}{
\lambda_m
}.
\end{align*}
This takes the same form as the spectral representation given by Wu \cite{Wu2004}.

Setting $x=0$ and $y=\ell$, we have
\begin{align*}
|\varphi_m(0)-\varphi_m(\ell)|^2
&=
\left|
\frac{1}{\sqrt{N}}(1-\omega^{m\ell})
\right|^2\\
&=
\frac{1}{N}
(1-\omega^{m\ell})(1-\omega^{-m\ell})\\
&=
\frac{2}{N}
\left(
1-\cos\frac{2\pi m\ell}{N}
\right).
\end{align*}
Therefore, we obtain
\[
R(0,\ell)
=
\frac{2}{N}
\sum_{m=1}^{N-1}
\frac{
1-\cos\left(\frac{2\pi m\ell}{N}\right)
}{
\lambda_m
}.
\]
\end{proof}

From the above, we obtain the following theorem.

\begin{theorem}\label{mainthm1}
Let $r$ be an integer such that $1\le r\le (N-3)/2$.
Then, the effective resistance between two vertices $0$ and $\ell$ at distance $\ell\in\{1,2,\dots,(N-1)/2\}$ of the graph $G_N^{(r)}$ is given by
\[
R(0,\ell)
=
\frac{4}{N}
\sum_{m=1}^{(N-1)/2}
\frac{
1-\cos\left(\frac{2\pi m\ell}{N}\right)
}{
(N-2r)+2\displaystyle\sum_{j=1}^{r}
\cos\left(\frac{2\pi mj}{N}\right)
}.
\]
\end{theorem}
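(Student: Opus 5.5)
The plan is to start from Proposition \ref{prop31} and fold the sum using the symmetry $m\leftrightarrow N-m$. Since $N$ is odd, the indices $1,\dots,N-1$ split into the disjoint pairs $\{m,N-m\}$ with $1\le m\le (N-1)/2$, and no index is fixed by the involution. For each pair I will check that both the numerator and the denominator of the summand are unchanged when $m$ is replaced by $N-m$. The point is that $\cos\bigl(2\pi(N-m)k/N\bigr)=\cos(2\pi k-2\pi mk/N)=\cos(2\pi mk/N)$ holds for every integer $k$. Taking $k=\ell$ handles the numerator $1-\cos(2\pi m\ell/N)$. Taking $k=j$ for $j=1,\dots,r$ shows that the closed form of $\lambda_m$ in Proposition \ref{prop32} is invariant, so $\lambda_{N-m}=\lambda_m$.

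Each pair therefore contributes twice the term indexed by $m$. This turns the prefactor $2/N$ into $4/N$ and the range of summation into $1\le m\le (N-1)/2$. Substituting the explicit formula for $\lambda_m$ from Proposition \ref{prop32}, valid for $m\neq 0$, then gives exactly the stated expression.

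The only point needing care, which I expect to be the main (mild) obstacle, is that every $\lambda_m$ with $1\le m\le N-1$ is nonzero, so that Lemma \ref{lem33} and Proposition \ref{prop31} are legitimate. This amounts to connectivity of $G_N^{(r)}$ under $1\le r\le (N-3)/2$. I would argue it directly. The graph contains the edges of distance $q=(N-1)/2$, and $\gcd(q,N)=\gcd(q,2q+1)=1$. Hence the step $x\mapsto x+q$ generates $\mathbb{Z}/N\mathbb{Z}$, which gives a Hamiltonian cycle inside $G_N^{(r)}$. A connected graph has a one-dimensional Laplacian kernel spanned by $\varphi_0$, so $\lambda_m>0$ for $m\neq 0$.
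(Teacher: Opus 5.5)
Your proposal is correct and matches the paper's proof: you start from Proposition~\ref{prop31}, insert the eigenvalue formula of Proposition~\ref{prop32}, and fold the sum using the invariance under $m\leftrightarrow N-m$, with $N$ odd making the pairs $\{m,N-m\}$ disjoint. Your extra connectivity check is a useful addition, since the paper only asserts connectivity and uses $\lambda_m\neq 0$ tacitly in Lemma~\ref{lem33}: the distance-$q$ edges with $\gcd(q,N)=1$ form a Hamiltonian cycle, so $\lambda_m>0$ for $m\neq 0$.
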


\begin{proof}
From Proposition \ref{prop31}, we have
\[
R(0,\ell)
=
\frac{2}{N}
\sum_{m=1}^{N-1}
\frac{
1-\cos\left(\frac{2\pi m\ell}{N}\right)
}{
\lambda_m
}.
\]
Since Proposition \ref{prop32} provides
\[
\lambda_m
=
(N-2r)+2\sum_{j=1}^{r}
\cos\left(\frac{2\pi mj}{N}\right)
\]
for $1\le m\le N-1$, we can write
\[
R(0,\ell)
=
\frac{2}{N}
\sum_{m=1}^{N-1}
f(m),
\]
where
\[
f(m)
=
\frac{
1-\cos\left(\frac{2\pi m\ell}{N}\right)
}{
(N-2r)+2\displaystyle\sum_{j=1}^{r}
\cos\left(\frac{2\pi mj}{N}\right)
}.
\]

By the symmetry of the cosine function, it holds that $f(N-m)=f(m)$.
Moreover, since $N$ is odd, $m$ and $N-m$ form distinct pairs for $m=1,2,\dots,(N-1)/2$. 
Therefore, we have
\[
\sum_{m=1}^{N-1}f(m)
=
2\sum_{m=1}^{(N-1)/2}f(m).
\]
This yields
\[
R(0,\ell)
=
\frac{4}{N}
\sum_{m=1}^{(N-1)/2}
\frac{
1-\cos\left(\frac{2\pi m\ell}{N}\right)
}{
(N-2r)+2\displaystyle\sum_{j=1}^{r}
\cos\left(\frac{2\pi mj}{N}\right)
}.
\]
\end{proof}

\subsection{Average Hitting Time}

Let us consider a simple random walk on $G_N^{(r)}$.
This is a stochastic process in which at each time step the walker moves from the current position to an adjacent vertex with uniform probability.
For vertices $x,y\in V$, let $T_y$ be the hitting time to vertex $y$.
We define the average hitting time starting from vertex $x$ to vertex $y$ as $h(x, y)=\mathbb{E}_x[T_y]$.

Nash-Williams \cite{Nash-Williams1959} provided the following relationship between the average hitting time and the effective resistance.
\begin{theorem}[Nash-Williams \cite{Nash-Williams1959}] \label{thm:NW}
\[
h(x, y)+h(y, x)=2|E|R(x,y). 
\]
\end{theorem}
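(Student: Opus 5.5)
The plan is the standard linear-algebra proof of the commute-time identity, using only the Laplacian $L$ and its Moore--Penrose inverse $L^+$ as set up above. Assume $G$ is connected, so that the hitting times are finite and $\ker L$ is spanned by the all-ones vector $\mathbf 1$. Fix a target vertex $y$ and define the vector $h_y(z):=h(z,y)$, with $h_y(y)=0$.

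\textbf{Step 1: a Laplacian equation for $h_y$.} Conditioning on the first step of the walk gives, for $z\neq y$,
\[
h(z,y)=1+\frac{1}{\deg(z)}\sum_{w\sim z}h(w,y).
\]
Multiplying by $\deg(z)$ shows $(Lh_y)(z)=\deg(z)$ for every $z\neq y$. The value at $y$ is forced by $\mathbf 1^{*}L=0$: the entries of $Lh_y$ sum to zero, and $\sum_z\deg(z)=2|E|$, so $(Lh_y)(y)=\deg(y)-2|E|$. Writing $\mathbf d$ for the degree vector, this gives
\[
Lh_y=\mathbf d-2|E|\,e_y .
\]
The same identity holds for $h_x$ with $e_x$ in place of $e_y$.

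\textbf{Step 2: subtract and invert.} Subtracting the two identities eliminates $\mathbf d$:
\[
L(h_y-h_x)=2|E|(e_x-e_y).
\]
The right-hand side is orthogonal to $\mathbf 1$, hence lies in the range of $L$. Since $LL^+$ is the orthogonal projection onto $\mathbf 1^{\perp}$ and $\ker L=\mathbb C\mathbf 1$, we get
\[
h_y-h_x=2|E|\,L^+(e_x-e_y)+c\,\mathbf 1
\]
for some constant $c$.

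\textbf{Step 3: pair with $e_x-e_y$.} Take the inner product of both sides with $e_x-e_y$. The constant term vanishes because $(e_x-e_y)^{*}\mathbf 1=0$. On the left, using $h(x,x)=h(y,y)=0$,
\[
(e_x-e_y)^{*}(h_y-h_x)=h(x,y)-h(y,y)-h(x,x)+h(y,x)=h(x,y)+h(y,x).
\]
On the right we obtain $2|E|\,(e_x-e_y)^{*}L^+(e_x-e_y)=2|E|R(x,y)$ by the definition of effective resistance. This proves the identity.

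\textbf{Main point of care.} The delicate step is Step 1 together with the justification in Step 2. One must check that hitting times are finite, which holds because a connected finite graph gives an irreducible chain, and must correctly determine the $y$-th component of $Lh_y$ using the zero column sums of $L$. Once $L(h_y-h_x)$ is identified, the pseudoinverse argument is routine.
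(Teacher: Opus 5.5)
Your proof is correct and complete. The paper does not prove this identity: it quotes it as a classical result and uses it only to pass from $R(x,y)$ to $h(0,\ell)=|E|\,R(0,\ell)$, so your argument supplies a derivation the paper leaves to the citation.

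The usual proof in the literature is electrical. One views $h(\cdot,y)$ as the potential produced when $\deg(z)$ units of current enter at every vertex $z$ and $2|E|$ units leave at $y$. Superposing this with the reversed configuration for $x$ gives a potential difference of $2|E|R(x,y)$ between $x$ and $y$.

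Your Steps 1--3 are exactly the linear-algebraic form of this superposition:
\begin{itemize}
\item The equation $Lh_y=\mathbf d-2|E|e_y$ is Kirchhoff's current law for that configuration.
\item Subtracting the equations for $h_y$ and $h_x$ cancels the distributed sources $\mathbf d$.
\item Pairing with $e_x-e_y$ reads off the potential difference.
\end{itemize}

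Your version has two advantages over the bare citation. First, it works directly with the definition $R(x,y)=(e_x-e_y)^*L^+(e_x-e_y)$ that the paper adopts. You therefore need no separate identification of the physical effective resistance with this pseudoinverse quadratic form. Second, it makes explicit the only hypotheses used: connectivity, so that $\ker L=\mathbb C\mathbf 1$, $e_x-e_y$ lies in the range of $L$, and hitting times are finite. All of these hold for the graphs $G_N^{(r)}$ studied in the paper.
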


The number of edges in $G_N^{(r)}$ is $|E|=N(N-1-2r)/2$, and since the graph is vertex-transitive, we have $h(x, y)=h(y, x)$.
Therefore, by Theorem \ref{thm:NW}, we obtain
\[
h(x, y)=|E|R(x,y),
\]
yielding the following closed formula for the average hitting time.

\begin{corollary}\label{cor:hitting-general}
Let $r$ be an integer such that $1\le r\le (N-3)/2$.
With the two vertices $0$ and $\ell$ at distance $\ell$ of the graph $G_N^{(r)}$, it follows that
\[
h(0, \ell)=2(N-1-2r)\sum_{m=1}^{(N-1)/2}\frac{1-\cos\left(\frac{2\pi m\ell}{N}\right)}{(N-2r)+2\displaystyle\sum_{j=1}^{r}\cos\left(\frac{2\pi mj}{N}\right)}.
\]
\end{corollary}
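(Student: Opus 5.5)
The plan is to combine Theorem~\ref{mainthm1} with the Nash-Williams identity (Theorem~\ref{thm:NW}). The one point that needs real justification is the symmetry $h(0,\ell)=h(\ell,0)$. I will not appeal to vertex-transitivity in general for it, since that is a nontrivial fact. Instead I will exhibit an explicit graph automorphism that swaps $0$ and $\ell$.

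First, consider the map $\sigma:V\to V$, $\sigma(x)=\ell-x \pmod N$. It is the inversion $x\mapsto -x$ followed by the translation $x\mapsto x+\ell$. Section~\ref{sec02} notes that both of these preserve the circular distance, so $\sigma$ is an automorphism of $G_N^{(r)}$, and it satisfies $\sigma(0)=\ell$ and $\sigma(\ell)=0$. A graph automorphism maps the simple random walk started at $x$ to the simple random walk started at $\sigma(x)$, and it maps hitting times of $y$ to hitting times of $\sigma(y)$. Hence $h(x,y)=h(\sigma(x),\sigma(y))$ for all $x,y$, and in particular $h(0,\ell)=h(\ell,0)$.

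Second, apply Theorem~\ref{thm:NW} with $x=0$ and $y=\ell$. This gives $2h(0,\ell)=2|E|R(0,\ell)$, so $h(0,\ell)=|E|R(0,\ell)$. Because $G_N^{(r)}$ is regular of degree $N-1-2r$ (Section~\ref{sec02}), we have $|E|=N(N-1-2r)/2$. Substituting the finite-sum expression for $R(0,\ell)$ from Theorem~\ref{mainthm1}, the prefactor becomes
\[
\frac{N(N-1-2r)}{2}\cdot\frac{4}{N}=2(N-1-2r),
\]
which is exactly the claimed formula. The hypotheses $1\le r\le (N-3)/2$ and $1\le \ell\le (N-1)/2$ are the same as those of Theorem~\ref{mainthm1}. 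They also ensure the graph is connected, so the hitting times are finite and Theorem~\ref{thm:NW} applies.

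The main obstacle is only the symmetry step, which the explicit automorphism $\sigma$ handles. The remaining steps are direct substitution.
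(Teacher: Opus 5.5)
Your proof is correct and has the same skeleton as the paper's: the Nash-Williams identity (Theorem~\ref{thm:NW}), the symmetry $h(0,\ell)=h(\ell,0)$, the edge count $|E|=N(N-1-2r)/2$, and substitution of Theorem~\ref{mainthm1}.

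The only difference is how you justify the symmetry. The paper just says the graph is vertex-transitive. That implication is true but not immediate, because a transitive automorphism group need not contain an element exchanging two given vertices. One needs an extra argument, for instance Tetali's formula $h(x,y)=\frac12\sum_{z}\deg(z)\bigl(R(x,y)+R(y,z)-R(x,z)\bigr)$ together with the fact that $\sum_z R(x,z)$ does not depend on $x$.

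Your reflection $\sigma(x)=\ell-x$ exchanges $0$ and $\ell$ directly. This makes the step a one-line, self-contained consequence of the translation and inversion invariance already recorded in Section~\ref{sec02}. The paper's appeal covers any vertex-transitive graph, but for this circulant graph your argument is the cleaner and more fully justified one.
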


For $r=1,2$, we can also obtain closed forms expressed in terms of Fibonacci-type sequences and generalized sequences.
We discuss these details in the next section.

\section{Sequence Structures That Appear When $r=1$ and $r=2$} \label{sec04}

In this section, we analyze Theorem \ref{mainthm1} for $r=1$ and $r=2$.
For $r=1$, the graph $G_N^{(1)}=K_N\setminus C_N$ coincides with the one studied by Chair \cite{Chair}, and we rederive the closed forms expressed in terms of Bejaia and Pisa numbers. 
On the other hand, for $r=2$, the denominator reduces to a quadratic polynomial, and generalized Bejaia--Pisa-type sequences, which are naturally determined from its complex roots, arise.

Furthermore, we provide explicit formulas for the Kirchhoff index, the number of spanning trees, and the average hitting time for each case.

The Kirchhoff index $\Kf(G)$ of a graph $G=(V,E)$ is defined in terms of the effective resistance as follows \cite{KR1993,LNT1999}.
\[
\Kf(G):=\frac{1}{2}\sum_{x,y\in V}R(x,y).
\]

Next, the graph complexity of $G$, denoted by $t(G)$, refers to the number of spanning trees of $G$.
The number of spanning trees can be obtained using the Matrix Tree Theorem.

\begin{theorem}[Matrix Tree Theorem, Kirchhoff \cite{Kir}] \label{thm:MT}
Let $\lambda_i$ ($0\leq i\leq N-1$) be the eigenvalues of the Laplacian matrix of an undirected graph $G$.
Then,
\[
t(G)=\frac{1}{N}\prod_{i=1}^{N-1}\lambda_i.
\]
\end{theorem}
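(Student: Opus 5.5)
We would prove the statement in two stages. First we establish the classical form of Kirchhoff's theorem: every principal $(N-1)\times(N-1)$ minor of $L$ equals $t(G)$. Then we convert this into the eigenvalue product by reading off one coefficient of the characteristic polynomial.

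\textbf{Stage 1 (the cofactor form).} Orient each edge of $G$ arbitrarily and let $B$ be the $N\times|E|$ signed incidence matrix. In the column for an edge $e=(u\to v)$, put $+1$ in row $u$, $-1$ in row $v$, and $0$ elsewhere. A direct check gives $L=BB^{T}$: the diagonal entries are the degrees, and the off-diagonal $(x,y)$ entry is $-1$ exactly when $x\sim y$.

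Fix a vertex $v$ and delete its row to obtain $B_v$. The principal minor of $L$ obtained by deleting row and column $v$ is $\det(B_vB_v^{T})$. By Cauchy--Binet this equals $\sum_{S}\det(B_v[S])^2$, where $S$ runs over all $(N-1)$-subsets of edges and $B_v[S]$ is the square submatrix with columns in $S$.

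The key lemma, and the main obstacle, is the following: $\det(B_v[S])=\pm1$ if $S$ is a spanning tree, and $\det(B_v[S])=0$ otherwise.
\begin{itemize}
\item If $S$ is not a spanning tree, then, having $N-1$ edges, it contains a cycle. The signed sum of the columns along that cycle vanishes, so the columns are dependent and the determinant is $0$.
\item If $S$ is a spanning tree, we argue by induction on $N$. The tree has at least two leaves, so it has a leaf $w\neq v$. Its row in $B_v[S]$ has a single nonzero entry $\pm1$. Expanding along this row reduces the problem to the tree $S$ minus the edge at $w$ on the vertex set $V\setminus\{w\}$, with the same deleted vertex $v$.
\end{itemize}
Hence each principal $(N-1)$-minor equals $t(G)$.

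\textbf{Stage 2 (the eigenvalue form).} Write $\det(xI-L)=\prod_{i=0}^{N-1}(x-\lambda_i)$ with $\lambda_0=0$. The coefficient of $x$ is $(-1)^{N-1}\prod_{i=1}^{N-1}\lambda_i$. On the other hand, the coefficient of $x$ in $\det(xI-L)$ equals $(-1)^{N-1}$ times the sum of all principal $(N-1)$-minors of $L$. By Stage 1 this sum is $N\,t(G)$. Comparing the two expressions gives
\[
t(G)=\frac1N\prod_{i=1}^{N-1}\lambda_i .
\]
If $G$ is disconnected, both sides vanish: there are no spanning trees, and $0$ is a repeated eigenvalue of $L$. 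So the identity also holds in that case.
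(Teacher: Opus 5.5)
Your proof is correct. The paper offers no proof to compare it with: the authors quote this as Kirchhoff's classical theorem and use it only to pass from the eigenvalues of Proposition~\ref{prop32} to \eqref{eq:mt}.

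What you wrote is the standard textbook argument, and each step checks out:
\begin{itemize}
\item $L=BB^{T}$;
\item Cauchy--Binet for the reduced incidence matrix $B_v$;
\item ``$N-1$ edges but not a spanning tree $\Rightarrow$ contains a cycle $\Rightarrow$ dependent columns'';
\item the leaf expansion for trees;
\item the identification of the coefficient of $x$ in $\det(xI-L)=x\prod_{i=1}^{N-1}(x-\lambda_i)$ with $(-1)^{N-1}$ times the sum of the $N$ principal $(N-1)$-minors.
\end{itemize}

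Three minor points:
\begin{itemize}
\item The tree induction needs an explicit base case, either the empty determinant for $N=1$ or $B_v[S]=(\pm1)$ for $N=2$.
\item The separate discussion of disconnected $G$ is superfluous. Stage~2 only uses that $0$ is an eigenvalue (eigenvector $\mathbf{1}$), which holds for every graph, and Stage~1 already gives each minor $=t(G)=0$ in that case.
\item You make explicit the labeling $\lambda_0=0$, which the statement leaves tacit but the formula genuinely requires.
\end{itemize}

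What your route buys over the paper's bare citation is the cofactor form itself. It also underlies Theorem~\ref{Kir}, which the paper likewise cites without proof. Apply Stage~1 to the multigraph obtained by merging $x$ and $y$; your incidence-matrix argument works verbatim with parallel edges. This identifies $t(G;x,y)$ with the principal minor of $L$ with rows and columns $x,y$ deleted. Let $L_{\hat y}$ denote $L$ with row and column $y$ deleted. For connected $G$, Cramer's rule for the grounded system $L_{\hat y}w=e_x$ then gives $R(x,y)=(L_{\hat y}^{-1})_{xx}=t(G;x,y)/t(G)$.
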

From Proposition \ref{prop32} and Theorem \ref{thm:MT}, we obtain
\begin{equation} \label{eq:mt}
t(G_N^{(r)})=\frac{1}{N}\prod_{i=1}^{N-1}\left( (N-2r)+2\sum_{j=1}^{r}
\cos\left(\frac{2\pi ij}{N}\right) \right).
\end{equation}

We denote by $t(G;x,y)$ the graph complexity of $G$ obtained by identifying two vertices $x$ and $y$.
The following theorem also shows that the effective resistance and the graph complexity are closely related.

\begin{theorem}[Kirchhoff \cite{Kir}] \label{Kir}
\[
t(G;x,y)=R(x,y) \cdot t(G). 
\]
\end{theorem}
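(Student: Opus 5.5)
The plan is to express both sides of the identity as ratios or values of principal minors of the Laplacian, and then compare them using Cramer's rule. For a vertex set $S\subseteq V$, let $L_{[S]}$ denote the matrix obtained from $L$ by deleting the rows and columns indexed by $S$. I will use the Matrix Tree Theorem in its cofactor form, $t(G)=\det L_{[\{x\}]}$, which is equivalent to Theorem~\ref{thm:MT}. I allow $G$ to be a multigraph, since identifying $x$ and $y$ may create parallel edges. The proof has three steps:
\begin{itemize}
\item rewrite $R(x,y)$ in terms of the reduced Laplacian $L_{[\{x\}]}$;
\item compute the relevant entry of $L_{[\{x\}]}^{-1}$ by Cramer's rule;
\item identify the resulting minor with the number of spanning trees of the merged graph.
\end{itemize}

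\textbf{Step 1 (grounding at $x$).} Put $v:=L^+(e_x-e_y)$. Since $e_x-e_y$ is orthogonal to $\varphi_0$, i.e.\ lies in the range of $L$, we have $Lv=e_x-e_y$, and by definition $R(x,y)=(e_x-e_y)^*v=v_x-v_y$. Set $u:=v-v_x\mathbf 1$. Because $L\mathbf 1=0$, we still have $Lu=e_x-e_y$, and now $u_x=0$.

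Restrict the equation $Lu=e_x-e_y$ to the rows indexed by $V\setminus\{x\}$. Since $u_x=0$, the column indexed by $x$ contributes nothing, so this gives $L_{[\{x\}]}u'=-e_y$, where $u'$ is $u$ with the $x$-coordinate removed. For a connected $G$, the matrix $L_{[\{x\}]}$ is invertible, because its determinant is $t(G)>0$. Hence
\[
R(x,y)=v_x-v_y=-u_y=\bigl(L_{[\{x\}]}^{-1}\bigr)_{yy}.
\]

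\textbf{Step 2 (Cramer's rule).} The diagonal entry of the inverse is a ratio of a principal minor to the determinant:
\[
\bigl(L_{[\{x\}]}^{-1}\bigr)_{yy}=\frac{\det L_{[\{x,y\}]}}{\det L_{[\{x\}]}}=\frac{\det L_{[\{x,y\}]}}{t(G)}.
\]
So it suffices to show that $\det L_{[\{x,y\}]}=t(G;x,y)$.

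\textbf{Step 3 (the merged graph).} This is the only step needing care. Let $G'$ be the multigraph obtained by identifying $x$ and $y$ into a single vertex $z$. In $G'$ we keep parallel edges and discard any loop coming from an edge $xy$; loops do not affect spanning trees.

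For $w\notin\{x,y\}$, the degree of $w$ in $G'$ equals its degree in $G$. Likewise, the multiplicity of the edge $ww'$ in $G'$ equals that in $G$ whenever $w,w'\notin\{x,y\}$. Therefore the Laplacian of $G'$ with the row and column of $z$ deleted is exactly $L_{[\{x,y\}]}$. Applying the cofactor form of the Matrix Tree Theorem to $G'$ at the vertex $z$ gives $\det L_{[\{x,y\}]}=t(G')=t(G;x,y)$. Combining the three steps yields $t(G;x,y)=R(x,y)\,t(G)$.

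The main obstacle I expect is making sure the Matrix Tree Theorem is legitimately applied to multigraphs, together with the cofactor form rather than the eigenvalue form stated in Theorem~\ref{thm:MT}. The standard Cauchy--Binet proof via the incidence matrix handles parallel edges without change, so I would simply cite it in that form.
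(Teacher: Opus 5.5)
Your argument is correct. Note that the paper gives no proof of this statement: it quotes the identity from Kirchhoff and uses it only as a black box, to get $t(G_N^{(r)};0,\ell)$ from $R_{N,r}(\ell)$ and $t(G_N^{(r)})$.

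What you supply is the standard self-contained derivation, and all three steps check out:
\begin{itemize}
\item Grounding at $x$ turns $R(x,y)$ into the diagonal entry $\bigl(L_{[\{x\}]}^{-1}\bigr)_{yy}$, and your signs are right, since $R=u_x-u_y=-u_y$.
\item Cramer's rule rewrites this entry as $\det L_{[\{x,y\}]}/\det L_{[\{x\}]}$.
\item The cofactor form of the Matrix-Tree Theorem, applied to $G$ and to the contracted multigraph, identifies the denominator and the numerator.
\end{itemize}

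Your Step 3 also pins down something the paper leaves implicit: what $t(G;x,y)$ means. The identity holds only if the identification keeps parallel edges. Equivalently, $t(G;x,y)$ counts two-component spanning forests that separate $x$ from $y$. For the path with edges $xw$ and $wy$, we have $R(x,y)=2$ and $t(G)=1$. The contracted graph has two parallel $zw$ edges and hence $2$ spanning trees, whereas identifying $x$ and $y$ as a simple graph would give $1$.

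Two small points of hygiene:
\begin{itemize}
\item In Step 1, the claim that $e_x-e_y$ lies in the range of $L$ uses $\ker L=\operatorname{span}\mathbf 1$, which is exactly connectedness of $G$. For a general graph, write $\mathbf 1$ rather than the circulant-specific $\varphi_0$.
\item State $x\neq y$, so that $L_{[\{x,y\}]}$ is a genuine $(N-2)\times(N-2)$ minor.
\end{itemize}
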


\subsection{Case: $r=1$}

Let $N$ be an odd number such that $N \ge 5$.
We denote the effective resistance between two vertices at distance $\ell$ of $G_N^{(1)}$ by $R_{N,1}(\ell)$.

Let
\[
\Delta=N(N-4),
\qquad
\alpha=\frac{N-2+\sqrt{\Delta}}{2},
\qquad
\beta=\frac{N-2-\sqrt{\Delta}}{2}.
\]
Then, we have
\[
\alpha+\beta=N-2,
\qquad
\alpha\beta=1.
\]
Furthermore,
\[
\mathcal B_k(N)=\frac{\alpha^k-\beta^k}{\sqrt{\Delta}}
\]
is called the $k$-th Bejaia number, and
\[
\mathcal P_k(N)=\alpha^k+\beta^k
\]
is called the $k$-th Pisa number.

From the above, we obtain the following theorem.

\begin{theorem}[Chair \cite{Chair}]\label{thm:r1}
Let $N$ be an odd number such that $N \ge 5$ and an integer $\ell$ such that $1\le \ell\le (N-1)/2$.
Then, the effective resistance between two vertices at distance $\ell$ is
\[
R_{N,1}(\ell)
=
(-1)^\ell \mathcal B_\ell(N)
+
\frac{1-\beta^N}{\sqrt{\Delta}(1+\beta^N)}
\left(
2-(-1)^\ell\mathcal P_\ell(N)
\right),
\]
where $\mathcal P_\ell(N)$ is the $\ell$-th Pisa number, and $\mathcal B_\ell(N)$ is the $\ell$-th Bejaia number.
\end{theorem}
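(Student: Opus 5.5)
The plan is to specialize Theorem~\ref{mainthm1} to $r=1$ and evaluate the resulting trigonometric sum in closed form by a Fourier (Poisson-kernel) expansion of the reciprocal eigenvalue. Write $\theta_m=2\pi m/N$. First I return to the full-range form of Proposition~\ref{prop31} rather than the half-range form. The $m=0$ term has vanishing numerator, so it can be added harmlessly. This gives
\[
R_{N,1}(\ell)=\frac{2}{N}\bigl(S(0)-S(\ell)\bigr),\qquad
S(\ell):=\sum_{m=0}^{N-1}\frac{\cos(\ell\theta_m)}{N-2+2\cos\theta_m}.
\]
Adding the $m=0$ term is legitimate only if each sum $S(0)$ and $S(\ell)$ is separately finite. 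This holds because $N-2+2\cos\theta\ge N-4>0$ for $N\ge5$.

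The key step is the factorization $N-2+2\cos\theta=\alpha^{-1}(\alpha+e^{i\theta})(\alpha+e^{-i\theta})$, which uses $\alpha+\beta=N-2$ and $\alpha\beta=1$. Since $\Delta>0$ for $N\ge5$, the roots are real and $0<\beta<1<\alpha$. I then apply partial fractions, and equivalently the standard Poisson-kernel expansion:
\[
\frac{1}{N-2+2\cos\theta}=\frac{1}{\sqrt{\Delta}}\sum_{k\in\mathbb Z}(-\beta)^{|k|}e^{ik\theta}.
\]
This series converges absolutely because $|\beta|<1$, and here $\alpha-\beta=\sqrt\Delta$. I verify it by summing the two geometric series in $-\beta e^{\pm i\theta}$.

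Next I substitute $\theta=\theta_m$ and sum over $m$. By the orthogonality relation from Lemma~\ref{lem31}, $\sum_m e^{i(k\pm\ell)\theta_m}$ equals $N$ when $k\equiv\mp\ell \pmod N$ and $0$ otherwise. Hence
\[
S(\ell)=\frac{N}{\sqrt\Delta}\sum_{k\equiv \ell\ (N)}(-\beta)^{|k|}
=\frac{N}{\sqrt\Delta}\cdot\frac{(-\beta)^{\ell}+(-\beta)^{N-\ell}}{1-(-\beta)^N}
\]
for $0\le\ell\le N-1$. Because $N$ is odd, $(-\beta)^N=-\beta^N$, so $S(0)=\frac{N}{\sqrt\Delta}\frac{1-\beta^N}{1+\beta^N}$. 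Using $\beta^{-\ell}=\alpha^\ell$, the numerator of $S(\ell)$ becomes $(-1)^\ell(\beta^\ell-\beta^N\alpha^\ell)$.

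The last step is to rewrite in Bejaia/Pisa form. I substitute $\alpha^\ell=\tfrac12(\mathcal P_\ell+\sqrt\Delta\,\mathcal B_\ell)$ and $\beta^\ell=\tfrac12(\mathcal P_\ell-\sqrt\Delta\,\mathcal B_\ell)$. This gives
\[
\beta^\ell-\beta^N\alpha^\ell=\tfrac12\mathcal P_\ell(1-\beta^N)-\tfrac12\sqrt\Delta\,\mathcal B_\ell(1+\beta^N).
\]
Inserting this into $\frac2N(S(0)-S(\ell))$ should produce exactly $(-1)^\ell\mathcal B_\ell(N)+\frac{1-\beta^N}{\sqrt\Delta(1+\beta^N)}\bigl(2-(-1)^\ell\mathcal P_\ell(N)\bigr)$.

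I expect the main obstacle to be the aliasing step. It requires keeping track of the residue classes $k\equiv\pm\ell \pmod N$ with the correct absolute values $|k|$, and splitting into $k\ge0$ and $k<0$ so that the geometric series give $(-\beta)^\ell$ and $(-\beta)^{N-\ell}$. It also requires exploiting the parity of $N$ to get the sign $1+\beta^N$. The remaining algebra is routine once that sum is set up correctly.
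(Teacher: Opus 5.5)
Your proposal is correct and follows essentially the same route as the paper: extend to the full sum over $m$ (the $m=0$ term vanishes), factor $N-2+2\cos\theta$ using $\alpha\beta=1$, expand it as $\frac{1}{\sqrt{\Delta}}\sum_{k\in\mathbb Z}(-\beta)^{|k|}e^{ik\theta}$, and alias onto $k\equiv \ell \pmod N$, using that $N$ is odd to get the denominator $1+\beta^N$. Your final rewriting step is exactly the paper's identity $2(\beta^\ell-\beta^{N-\ell})=(1-\beta^N)\mathcal P_\ell(N)-(1+\beta^N)\sqrt{\Delta}\,\mathcal B_\ell(N)$.
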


\begin{proof}
Substituting $r=1$ into Theorem \ref{mainthm1} yields
\[
R_{N,1}(\ell)
=
\frac{4}{N}
\sum_{m=1}^{(N-1)/2}
\frac{1-\cos\left(\frac{2\pi m\ell}{N}\right)}
{(N-2)+2\cos\left(\frac{2\pi m}{N}\right)}.
\]

Let $\theta_m=2\pi m/N$.
Since the summand is invariant under replacing $m$ with $N-m$, and the term for $m=0$ is $1-\cos(\ell\theta_0)=0$, we can write
\[
R_{N,1}(\ell)
=
\frac{2}{N}
\sum_{m=0}^{N-1}
\frac{1-\cos(\ell\theta_m)}
{(N-2)+2\cos\theta_m}.
\]

Let $z=e^{i\theta}$, then we have
\[
(N-2)+2\cos\theta
=
(N-2)+z+z^{-1}.
\]
On the other hand, we have
\[
\alpha(1+\beta z)(1+\beta z^{-1})
=
(\alpha+\beta)+(z+z^{-1})
=
(N-2)+z+z^{-1}.
\]
Therefore,
\[
\frac{1}{(N-2)+2\cos\theta}
=
\frac{1}{\alpha(1+\beta e^{i\theta})(1+\beta e^{-i\theta})}.
\]
Furthermore, since $|\beta|<1$, we have
\[
\frac{1}{1+\beta e^{i\theta}}
=
\sum_{p=0}^{\infty}(-\beta)^p e^{ip\theta},
\qquad
\frac{1}{1+\beta e^{-i\theta}}
=
\sum_{q=0}^{\infty}(-\beta)^q e^{-iq\theta}.
\]
Grouping the terms by the index difference $n=p-q$, we obtain
\[
\frac{1}{(N-2)+2\cos\theta}
=
\frac{1}{\sqrt{\Delta}}
\sum_{n\in\mathbb Z}(-\beta)^{|n|}e^{in\theta}.
\]

Letting
\[
A_\ell
:=
\frac1N
\sum_{m=0}^{N-1}
\frac{\cos(\ell\theta_m)}
{(N-2)+2\cos\theta_m},
\]
we have
\[
R_{N,1}(\ell)=2(A_0-A_\ell).
\]
Using the Fourier expansion above, we obtain
\[
A_\ell
=
\frac{1}{\sqrt{\Delta}}
\sum_{t\in\mathbb Z}
(-\beta)^{|tN+\ell|}.
\]
In particular, for $1\le \ell\le (N-1)/2$, we have
\[
A_\ell
=
\frac{(-1)^\ell}{\sqrt{\Delta}}
\frac{\beta^\ell-\beta^{N-\ell}}{1+\beta^N},
\]
and
\[
A_0
=
\frac{1}{\sqrt{\Delta}}
\frac{1-\beta^N}{1+\beta^N}.
\]
Therefore,
\[
R_{N,1}(\ell)
=
\frac{2}{\sqrt{\Delta}}
\left(
\frac{1-\beta^N}{1+\beta^N}
-
(-1)^\ell
\frac{\beta^\ell-\beta^{N-\ell}}{1+\beta^N}
\right).
\]

Finally, we rewrite this using
\[
\mathcal B_\ell(N)=\frac{\beta^{-\ell}-\beta^\ell}{\sqrt{\Delta}},
\qquad
\mathcal P_\ell(N)=\beta^{-\ell}+\beta^\ell.
\]
Substituting the identity
\[
2(\beta^\ell-\beta^{N-\ell})
=
(1-\beta^N)\mathcal P_\ell(N)
-
(1+\beta^N)\sqrt{\Delta}\,\mathcal B_\ell(N),
\]
we obtain
\[
R_{N,1}(\ell)
=
(-1)^\ell \mathcal B_\ell(N)
+
\frac{1-\beta^N}{\sqrt{\Delta}(1+\beta^N)}
\left(
2-(-1)^\ell\mathcal P_\ell(N)
\right).
\]
\end{proof}

Furthermore, from Theorem \ref{thm:NW} and Theorem \ref{thm:r1},
we can give the closed formula for the average hitting time of $G_N^{(1)}$.

\begin{corollary}[Chair \cite{Chair}]
\[
h(0, \ell)
=\frac{N(N-3)}{2}\left(
(-1)^\ell \mathcal B_\ell(N) + \frac{1-\beta^N}{\sqrt{\Delta}(1+\beta^N)}
\left(
2-(-1)^\ell\mathcal P_\ell(N)
\right)
\right).
\]
\end{corollary}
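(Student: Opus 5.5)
The plan is to combine the relation between hitting times and effective resistance with the closed form already obtained in Theorem~\ref{thm:r1}. First, I will show that $h(0,\ell)=h(\ell,0)$. Vertex-transitivity alone does not in general force hitting times to be symmetric, so I will use a concrete automorphism instead. The map $\sigma(x)=\ell-x$ on $\mathbb{Z}/N\mathbb{Z}$ is a composition of the inversion $x\mapsto -x$ and the translation $x\mapsto x+\ell$. Both of these preserve $G_N^{(1)}$, as noted in Section~\ref{sec02}, so $\sigma$ is an automorphism of the graph. It swaps $0$ and $\ell$.

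Since a simple random walk is carried to a simple random walk by any graph automorphism, the law of $T_\ell$ started at $0$ equals the law of $T_0$ started at $\ell$. Hence $h(0,\ell)=h(\ell,0)$. Theorem~\ref{thm:NW} then gives
\[
2h(0,\ell)=2|E|\,R_{N,1}(\ell),
\qquad\text{that is,}\qquad
h(0,\ell)=|E|\,R_{N,1}(\ell).
\]

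Next, I count the edges. By Section~\ref{sec02}, $G_N^{(r)}$ is regular of degree $N-1-2r$, so for $r=1$ every vertex has degree $N-3$. Therefore $|E|=N(N-3)/2$.

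Finally, I substitute the expression for $R_{N,1}(\ell)$ from Theorem~\ref{thm:r1}, which is valid for odd $N\ge5$ and $1\le\ell\le(N-1)/2$. This yields exactly the stated formula.

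There is no substantial obstacle in this argument. The only point requiring care is the justification of $h(0,\ell)=h(\ell,0)$, which is why I use the explicit automorphism $\sigma$ rather than appealing to vertex-transitivity alone.
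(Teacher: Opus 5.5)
Your proposal is correct and follows the paper's route. Both arguments use $h(0,\ell)=h(\ell,0)$, the commute-time identity of Theorem~\ref{thm:NW}, $|E|=N(N-3)/2$, and then substitute Theorem~\ref{thm:r1}. For the symmetry step, your explicit automorphism $x\mapsto \ell-x$ swapping $0$ and $\ell$ gives a clean, elementary justification, whereas the paper simply attributes it to vertex-transitivity.

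One correction to your aside: vertex-transitivity alone \emph{does} force $h(x,y)=h(y,x)$ for simple random walk. For example, Tetali's formula $h(x,y)=\frac12\sum_z \deg(z)\,(R(x,y)+R(y,z)-R(x,z))$ gives $h(x,y)-h(y,x)=\sum_z \deg(z)\,(R(y,z)-R(x,z))=0$, because $\sum_z \deg(z)R(x,z)$ does not depend on $x$. So the paper's appeal is valid, just less self-contained than yours.
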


\begin{proof}

\begin{align*}
h(0, \ell)
&=|E|R_{N,1}(\ell) \\
&=\frac{N(N-3)}{2}\left((-1)^\ell \mathcal B_\ell(N) + \frac{1-\beta^N}{\sqrt{\Delta}(1+\beta^N)}\left(2-(-1)^\ell\mathcal P_\ell(N)
\right)\right).
\end{align*}

\end{proof}

\subsubsection{Kirchhoff Index}

Before calculating the Kirchhoff index, we prepare some formulas.

\begin{lemma} \label{lem:PB41}

\begin{enumerate}

    \item\label{formula:ab} $(1+\alpha)(1+\beta)=N$ 
    
    \item\label{formula:1} $\sum_{i=1}^{N-1}(-1)^ii\alpha^i=\frac{1}{1+\alpha}\left( \sum_{i=1}^N(-\alpha)^i-N(-\alpha)^N \right)$

    \item\label{formula:2} $\sum_{i=1}^{N-1}(-1)^ii\beta^i=\frac{1}{1+\beta}\left( \sum_{i=1}^N(-\beta)^i-N(-\beta)^N \right)$

    \item\label{formula:3} $\sum_{i=1}^{N-1}(-1)^i\mathcal B_i(N)=\frac{(-1)^{N+1}(\mathcal B_N(N)+\mathcal B_{N-1}(N))-1}{N}$

    \item\label{formula:4} $\sum_{i=1}^{N-1}(-1)^i\mathcal P_i(N)=\frac{(-1)^{N+1}(\mathcal P_N(N)+\mathcal P_{N-1}(N))-N}{N}$

    \item\label{formula:5} $\sum_{i=1}^{N-1}(-1)^ii\mathcal B_i(N)=\frac{(-1)^N(\mathcal B_N(N)-N(\mathcal B_N(N)+\mathcal B_{N-1}(N)))}{N}$

    \item\label{formula:6} $\sum_{i=1}^{N-1}(-1)^ii\mathcal P_i(N)=\frac{(-1)^N(\mathcal P_N(N)-N(\mathcal P_N(N)+\mathcal P_{N-1}(N)))-2}{N}$.
\end{enumerate}

\end{lemma}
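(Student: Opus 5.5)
The statement to prove is Lemma \ref{lem:PB41}. All seven identities follow from the two relations $\alpha+\beta=N-2$ and $\alpha\beta=1$, together with the Binet forms $\mathcal B_i(N)=(\alpha^i-\beta^i)/\sqrt{\Delta}$ and $\mathcal P_i(N)=\alpha^i+\beta^i$. The plan is to prove item \ref{formula:ab} first and then use it as a normalising constant everywhere else. Since $(1+\alpha)(1+\beta)=1+(\alpha+\beta)+\alpha\beta=1+(N-2)+1=N$, this item is immediate.

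For items \ref{formula:1} and \ref{formula:2}, we use the standard derivative-of-geometric-series (summation by parts) argument. Put $x=-\alpha$ and $S=\sum_{i=1}^{N-1} i x^i$. Compute
\[
(1-x)S=\sum_{i=1}^{N-1}x^i-(N-1)x^N=\sum_{i=1}^{N}x^i-Nx^N .
\]
Dividing by $1-x=1+\alpha$, which is nonzero because $\alpha>0$, gives item \ref{formula:1}. Item \ref{formula:2} is the same computation with $\beta$ in place of $\alpha$; here $1+\beta>0$ because $0<\beta<1$.

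For items \ref{formula:3} and \ref{formula:4}, we first record the geometric sums
\[
\sum_{i=1}^{N-1}(-\alpha)^i=\frac{-\alpha-(-\alpha)^N}{1+\alpha},
\]
and the analogous one for $\beta$. The next step is to bring each fraction to the denominator $N$ using item \ref{formula:ab}, i.e. $\frac{1}{1+\alpha}=\frac{1+\beta}{N}$. This gives
\[
\sum_{i=1}^{N-1}(-\alpha)^i=\frac{(1+\beta)\bigl(-\alpha-(-1)^N\alpha^N\bigr)}{N}.
\]
Expanding with $\alpha\beta=1$ gives numerator $-\alpha-1+(-1)^{N+1}(\alpha^N+\alpha^{N-1})$. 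Symmetrically, the $\beta$ sum has numerator $-\beta-1+(-1)^{N+1}(\beta^N+\beta^{N-1})$.
\begin{itemize}
\item Taking the difference and dividing by $\sqrt{\Delta}$, the constant $-1$ cancels and $-(\alpha-\beta)/\sqrt{\Delta}=-1$. This yields item \ref{formula:3}.
\item Taking the sum, the constants give $-\alpha-\beta-2=-N$. This yields item \ref{formula:4}.
\end{itemize}

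Items \ref{formula:5} and \ref{formula:6} follow the same way. Take items \ref{formula:1} and \ref{formula:2}, replace each $1/(1+\alpha)$ by $(1+\beta)/N$ and each $1/(1+\beta)$ by $(1+\alpha)/N$, and substitute the closed geometric sums just computed. Then form the difference divided by $\sqrt{\Delta}$, and the sum, respectively.

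The main obstacle is the bookkeeping in these last two items, namely collecting the terms $N(1+\beta)(-\alpha)^N$ and the products $(1+\beta)^2$ correctly. I would handle this by writing $(1+\beta)^2=\beta(1+\beta)\cdot(1+\alpha)\beta^{-1}\cdot\frac{1+\beta}{1+\alpha}$ only when needed. Preferably, I would simplify directly with $\alpha\beta=1$ and $(1+\beta)^2=N\beta$, which holds since $(1+\beta)^2=1+2\beta+\beta^2=\beta(\alpha+2+\beta)=N\beta$.

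As a sanity check on the resulting closed forms, and in particular on the constant $-2$ in item \ref{formula:6}, I would verify the case $N=5$ numerically.
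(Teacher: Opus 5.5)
Your proposal is correct and follows the paper's proof essentially step for step. Item 1 is direct expansion, items 2--3 use the shifted-sum trick (multiplying by $1+\alpha$, resp.\ $1+\beta$), and items 4--7 clear denominators with $(1+\alpha)(1+\beta)=N$ and $\alpha\beta=1$ before taking differences (divided by $\sqrt{\Delta}=\alpha-\beta$) and sums. Your identity $(1+\beta)^2=N\beta$, i.e.\ $1/(1+\alpha)^2=\beta/N$, is a slightly cleaner way to handle the squared denominators in items 6--7 than the paper's common denominator $(1+\alpha)^2(1+\beta)^2=N^2$, and the computation goes through as you outline.
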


\begin{proof}

\begin{itemize}

\item[(\ref{formula:ab})]
\[
(1+\alpha)(1+\beta)=\alpha\beta+\alpha+\beta+1=1+N-2+1=N.
\]

\item[(\ref{formula:1})]

\begin{align*}
\sum_{i=1}^{N-1}(-1)^ii\alpha^i+\alpha\sum_{i=1}^{N-1}(-1)^ii\alpha^i
&=-\alpha+\alpha^2+\cdots+(-1)^{N-1}\alpha^{N-1}+(-1)^{N-1}(N-1)\alpha^N \\
&=\sum_{i=1}^N(-\alpha)^i-N(-\alpha)^N.
\end{align*}
From the above, we obtain
\[
\sum_{i=1}^{N-1}(-1)^ii\alpha^i=\frac{1}{1+\alpha}\left( \sum_{i=1}^N(-\alpha)^i-N(-\alpha)^N \right).
\]

\item[(\ref{formula:2})]
\begin{align*}
\sum_{i=1}^{N-1}(-1)^ii\beta^i+\beta\sum_{i=1}^{N-1}(-1)^ii\beta^i
&=-\beta+\beta^2+\cdots+(-1)^{N-1}\beta^{N-1}+(-1)^{N-1}(N-1)\beta^N \\
&=\sum_{i=1}^N(-\beta)^i-N(-\beta)^N.
\end{align*}
From the above, we obtain
\[
\sum_{i=1}^{N-1}(-1)^ii\beta^i=\frac{1}{1+\beta}\left( \sum_{i=1}^N(-\beta)^i-N(-\beta)^N \right).
\]

\item[(\ref{formula:3})]
\begin{align*}
&\sum_{i=1}^{N-1}(-1)^i\mathcal B_i(N) \\
&=\frac{1}{\sqrt{\Delta}}\sum_{i=1}^{N-1}(-1)^i(\alpha^i-\beta^i) \\
&=\frac{1}{\sqrt{\Delta}}\left(\frac{-\alpha-(-\alpha)^N}{1+\alpha}-\frac{-\beta-(-\beta)^N}{1+\beta}\right) \\
&=\frac{1}{\sqrt{\Delta}}\left(\frac{-\alpha(1+\beta)-(1+\beta)(-\alpha)^N}{(1+\alpha)(1+\beta)}-\frac{-\beta(1+\alpha)-(1+\alpha)(-\beta)^N}{(1+\alpha)(1+\beta)}\right) \\
&=\frac{1}{N\sqrt{\Delta}}\left(-\alpha-\alpha\beta-(-\alpha-\alpha\beta)(-\alpha)^{N-1}+\beta+\alpha\beta-(\beta+\alpha\beta)(-\beta)^{N-1} \right) \quad \text{(by (\ref{formula:ab}))}\\
&=\frac{1}{N\sqrt{\Delta}} \left( -(\alpha-\beta)-(-\alpha-\alpha\beta)(-\alpha)^{N-1}-(\beta+\alpha\beta)(-\beta)^{N-1} \right) \\
&=\frac{1}{N\sqrt{\Delta}} \left( -\sqrt{\Delta}-(-\alpha-1)(-\alpha)^{N-1}-(\beta+1)(-\beta)^{N-1} \right) \\
&=\frac{1}{N} \left( -1-(-1)^N\frac{\alpha^N-\beta^N}{\sqrt{\Delta}}+(-1)^{N-1}\frac{\alpha^{N-1}-\beta^{N-1}}{\sqrt{\Delta}} \right) \\
&=\frac{(-1)^{N+1}(\mathcal B_N(N)+\mathcal B_{N-1}(N))-1}{N}.
\end{align*}

\item[(\ref{formula:4})]
\begin{align*}
\sum_{i=1}^{N-1}(-1)^i\mathcal P_i(N)&=\sum_{i=1}^{N-1}(-1)^i(\alpha^i+\beta^i) \\
&=\frac{-\alpha-(-\alpha)^N}{1+\alpha}+\frac{-\beta-(-\beta)^N}{1+\beta} \\
&=\frac{-\alpha(1+\beta)-(1+\beta)(-\alpha)^N}{(1+\alpha)(1+\beta)}+\frac{-\beta(1+\alpha)-(1+\alpha)(-\beta)^N}{(1+\alpha)(1+\beta)} \\
&=\frac{-\alpha-\alpha\beta-(-\alpha-\alpha\beta)(-\alpha)^{N-1}-\beta-\alpha\beta-(-\beta-\alpha\beta)(-\beta)^{N-1}}{N} \quad \text{(by (\ref{formula:ab}))}\\
&=\frac{1}{N} \left(-(\alpha+\beta)-2\alpha\beta-(-\alpha-\alpha\beta)(-\alpha)^{N-1}-(-\beta-\alpha\beta)(-\beta)^{N-1} \right) \\
&=\frac{1}{N} \left(-(N-2)-2-(-\alpha-1)(-\alpha)^{N-1}-(-\beta-1)(-\beta)^{N-1} \right) \\
&=\frac{1}{N}\left( -(-1)^N(\alpha^N+\beta^N)+(-1)^{N-1}(\alpha^{N-1}+\beta^{N-1})-N \right) \\
&=\frac{(-1)^{N+1}(\mathcal P_N(N)+\mathcal P_{N-1}(N))-N}{N}.
\end{align*}

\item[(\ref{formula:5})]
\begin{align*}
&\sum_{i=1}^{N-1}(-1)^ii\mathcal B_i(N) \\
&=\frac{1}{\sqrt{\Delta}}\sum_{i=1}^{N-1}(-1)^ii(\alpha^i-\beta^i) \\
&=\frac{1}{\sqrt{\Delta}}\left( \frac{1}{1+\alpha}\left( \sum_{i=1}^N(-\alpha)^i-N(-\alpha)^N \right)-\frac{1}{1+\beta}\left( \sum_{i=1}^N(-\beta)^i-N(-\beta)^N \right) \right) \quad \text{(by (\ref{formula:1}),(\ref{formula:2}))}\\
&=\frac{1}{\sqrt{\Delta}}\left( \frac{-\alpha-(-\alpha)^{N+1}}{(1+\alpha)^2}- \frac{-\beta-(-\beta)^{N+1}}{(1+\beta)^2}
-\frac{N(-\alpha)^N}{1+\alpha}+\frac{N(-\beta)^N}{1+\beta} \right) \\
&=\frac{1}{\sqrt{\Delta}}\times \left( \frac{-\alpha(1+\beta)^2-(1+\beta)^2(-\alpha)^{N+1}+\beta(1+\alpha)^2+(1+\alpha)^2(-\beta)^{N+1}}{(1+\alpha)^2(1+\beta)^2} \right.\\
&~\left. -\frac{N(1+\beta)(-\alpha)^N-N(1+\alpha)(-\beta)^N}{(1+\alpha)(1+\beta)} \right) \\
&=\frac{1}{\sqrt{\Delta}}\times \left(\frac{(-\alpha-\alpha\beta)(1+\beta)-(1+\beta)(-\alpha-\alpha\beta)(-\alpha)^N}{N^2} \right.\\
&~+\frac{(\beta+\alpha\beta)(1+\alpha)+(1+\alpha)(-\beta-\alpha\beta)(-\beta)^N}{N^2} \\
&~\left. -\frac{N(-\alpha-\alpha\beta)(-\alpha)^{N-1}-N(-\beta-\alpha\beta)(-\beta)^{N-1}}{N} \right) \quad \text{(by (\ref{formula:ab}))}\\
&=\frac{1}{N\sqrt{\Delta}}\times \left(\frac{-(\alpha+1)(1+\beta)+(1+\beta)(\alpha+1)(-\alpha)^N}{N} \right.\\
&~\left. +\frac{(\beta+1)(1+\alpha)-(1+\alpha)(\beta+1)(-\beta)^N}{N}
+N(\alpha+1)(-\alpha)^{N-1}-N(\beta+1)(-\beta)^{N-1} \right) \\
&=\frac{1}{N\sqrt{\Delta}}\times \left(\frac{-N+N(-\alpha)^N+N-N(-\beta)^N}{N} \right.\\
&~\left. +N(1+\alpha)(-\alpha)^{N-1}-N(1+\beta)(-\beta)^{N-1} \right) \quad \text{(by (\ref{formula:ab}))}\\
&=\frac{1}{N} \left( (-1)^N\frac{\alpha^N-\beta^N}{\sqrt{\Delta}}+N(-1)^{N-1}\frac{\alpha^{N-1}-\beta^{N-1}}{\sqrt{\Delta}}-N(-1)^N\frac{\alpha^N-\beta^N}{\sqrt{\Delta}} \right) \\
&=\frac{(-1)^N(\mathcal B_N(N)-N(\mathcal B_N(N)+\mathcal B_{N-1}(N)))}{N}.
\end{align*}

\item[(\ref{formula:6})]
\begin{align*}
&\sum_{i=1}^{N-1}(-1)^ii\mathcal P_i(N) \\
&=\sum_{i=1}^{N-1}(-1)^ii(\alpha^i+\beta^i) \\
&=\frac{1}{1+\alpha}\left( \sum_{i=1}^N(-\alpha)^i-N(-\alpha)^N \right)+\frac{1}{1+\beta}\left( \sum_{i=1}^N(-\beta)^i-N(-\beta)^N \right) \quad \text{(by (\ref{formula:1}),(\ref{formula:2}))}\\
&=\frac{-\alpha-(-\alpha)^{N+1}}{(1+\alpha)^2}+ \frac{-\beta-(-\beta)^{N+1}}{(1+\beta)^2}
-\frac{N(-\alpha)^N}{1+\alpha}-\frac{N(-\beta)^N}{1+\beta} \\
&=\frac{-\alpha(1+\beta)^2-(1+\beta)^2(-\alpha)^{N+1}-\beta(1+\alpha)^2-(1+\alpha)^2(-\beta)^{N+1}}{(1+\alpha)^2(1+\beta)^2} \\
&~-\frac{N(1+\beta)(-\alpha)^N+N(1+\alpha)(-\beta)^N}{(1+\alpha)(1+\beta)} \\
&=\frac{(-\alpha-\alpha\beta)(1+\beta)-(1+\beta)(-\alpha-\alpha\beta)(-\alpha)^N}{N^2} \\
&~+\frac{-(\beta+\alpha\beta)(1+\alpha)-(1+\alpha)(-\beta-\alpha\beta)(-\beta)^N}{N^2} \\
&-\frac{N(-\alpha-\alpha\beta)(-\alpha)^{N-1}+N(-\beta-\alpha\beta)(-\beta)^{N-1}}{N} \quad \text{(by (\ref{formula:ab}))}\\
&=\frac{-(\alpha+1)(1+\beta)+(1+\beta)(\alpha+1)(-\alpha)^N}{N^2}
+\frac{-(\beta+1)(1+\alpha)+(1+\alpha)(\beta+1)(-\beta)^N}{N^2} \\
&~-\frac{N(-\alpha-1)(-\alpha)^{N-1}+N(-\beta-1)(-\beta)^{N-1}}{N} \\
&=\frac{-2N+N(-1)^N(\alpha^N+\beta^N)}{N^2}
-\frac{N((-\alpha)^N-(-\alpha)^{N-1})+N((-\beta)^N-(-\beta)^{N-1})}{N} \quad \text{(by (\ref{formula:ab}))}\\
&=\frac{-2+(-1)^N(\alpha^N+\beta^N)}{N}-(-1)^N(\alpha^N+\beta^N)+(-1)^{N-1}(\alpha^{N-1}+\beta^{N-1}) \\
&=\frac{(-1)^N(\mathcal P_N(N)-N(\mathcal P_N(N)+\mathcal P_{N-1}(N)))-2}{N}.
\end{align*}

\end{itemize}

\end{proof}

From Theorem \ref{thm:r1}, we can obtain the following closed formula for the Kirchhoff index.

\begin{theorem}\label{thm:kirchhoff-r1}
\[
\Kf(G_N^{(1)})=\frac{N^2(1-\beta^N)}{\sqrt{\Delta}(1+\beta^N)}-1.
\]
\end{theorem}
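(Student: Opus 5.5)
The plan is to bypass summing the closed form of Theorem \ref{thm:r1} over $\ell$, and instead use the spectral representation from the proof of Proposition \ref{prop31} together with the quantity $A_0$ already computed in the proof of Theorem \ref{thm:r1}.

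\textbf{Step 1: reduce $\Kf$ to a sum of reciprocal eigenvalues.} From the proof of Proposition \ref{prop31},
\[
R(x,y)=\sum_{m=1}^{N-1}\frac{|\varphi_m(x)-\varphi_m(y)|^2}{\lambda_m}.
\]
Summing over all ordered pairs $(x,y)$ and expanding $|\varphi_m(x)-\varphi_m(y)|^2$ gives
\[
\sum_{x,y}|\varphi_m(x)-\varphi_m(y)|^2=2N\sum_x|\varphi_m(x)|^2-2\Bigl|\sum_x\varphi_m(x)\Bigr|^2 .
\]
For $m\neq 0$ the first sum equals $1$ (normalization, Lemma \ref{lem31}) and the second vanishes (orthogonality to $\varphi_0$). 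Hence
\[
\Kf(G_N^{(1)})=N\sum_{m=1}^{N-1}\frac{1}{\lambda_m}.
\]

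\textbf{Step 2: express this sum through $A_0$.} For $r=1$ and $1\le m\le N-1$, Proposition \ref{prop32} gives $\lambda_m=(N-2)+2\cos\theta_m$. The subtlety is the term $m=0$. In the definition of $A_0$ in the proof of Theorem \ref{thm:r1}, the summand at $m=0$ is $1/((N-2)+2)=1/N$; it is not the Laplacian eigenvalue $\lambda_0=0$. Therefore
\[
A_0=\frac1N\Bigl(\frac1N+\sum_{m=1}^{N-1}\frac1{\lambda_m}\Bigr),
\qquad\text{so}\qquad
\sum_{m=1}^{N-1}\frac1{\lambda_m}=NA_0-\frac1N .
\]
This careful bookkeeping of the $m=0$ term is the only real obstacle. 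The rest is substitution.

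\textbf{Step 3: substitute.} Combining Steps 1 and 2 gives $\Kf=N^2A_0-1$. The proof of Theorem \ref{thm:r1} already gives, via the Fourier expansion $\frac{1}{(N-2)+2\cos\theta}=\frac{1}{\sqrt\Delta}\sum_{n\in\mathbb Z}(-\beta)^{|n|}e^{in\theta}$, the value
\[
A_0=\frac{1}{\sqrt{\Delta}}\frac{1-\beta^N}{1+\beta^N}.
\]
Substituting this into $\Kf=N^2A_0-1$ yields the stated formula.

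\textbf{Consistency check.} As an alternative route, one could compute $\Kf=N\sum_{\ell=1}^{(N-1)/2}R_{N,1}(\ell)$ directly from Theorem \ref{thm:r1}, using the alternating sums in Lemma \ref{lem:PB41}. I would use that computation only to confirm the result, not as the main proof.
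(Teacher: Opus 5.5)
Your argument is correct, and it takes a genuinely different route from the paper.

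The paper works downstream of Theorem~\ref{thm:r1}. It writes $\Kf(G_N^{(1)})=\sum_{i=1}^{N-1}(N-i)R_{N,1}(i)$ and inserts the Bejaia--Pisa closed form for each $i$. It then evaluates the alternating sums $\sum(-1)^i\mathcal B_i(N)$, $\sum(-1)^i\mathcal P_i(N)$, $\sum(-1)^i i\,\mathcal B_i(N)$ and $\sum(-1)^i i\,\mathcal P_i(N)$ with Lemma~\ref{lem:PB41}. Finally it cancels down using $\mathcal B_N(N)=(\beta^{-N}-\beta^N)/\sqrt{\Delta}$ and $\mathcal P_N(N)=\beta^{-N}+\beta^N$.

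You work at the spectral level instead. The identity $\Kf=N\sum_{m\ge1}1/\lambda_m$ shows at once that all $\ell$-dependence averages out, so only $A_0$ is needed. The $-1$ is then explained as the extra $m=0$ term $1/N$ contained in $A_0$.

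What your route buys:
\begin{itemize}
\item It avoids Lemma~\ref{lem:PB41} entirely.
\item It sidesteps a point the paper leaves implicit. The paper applies the closed form of Theorem~\ref{thm:r1} for all $1\le i\le N-1$, although that theorem is stated only for $\ell\le (N-1)/2$. The extension does hold, since for odd $N$ the formula $A_\ell=\frac{(-1)^\ell}{\sqrt\Delta}\frac{\beta^\ell-\beta^{N-\ell}}{1+\beta^N}$ is valid for all $0\le\ell\le N$, but it needs a remark.
\item It transfers verbatim to $r=2$. The denominator again equals $N$ at $m=0$, and Lemma~\ref{lem:root-of-unity} with $k=0$ gives $\frac1N\sum_{m=0}^{N-1}\frac{1}{t_m-\tau}=\frac{1+\rho^N}{d(1-\rho^N)}=C_N$. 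Hence $\Kf(G_{N,2})=N\bigl(\frac{2N}{\sqrt{4N-25}}\Im(C_N)-\frac1N\bigr)$, which is Theorem~\ref{thm:kirchhoff-r2}, obtained without Lemma~\ref{lem:PB42}.
\end{itemize}

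What the paper's longer computation buys in exchange is the summation identities of Lemma~\ref{lem:PB41} and Lemma~\ref{lem:PB42} as by-products. It also uses Theorem~\ref{thm:r1} purely as a black box, so the proof stays inside the Bejaia--Pisa sequence framework.
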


\begin{proof}
\begin{align*}
\Kf(G_N^{(r)})&=\frac{1}{2}\sum_{x,y\in V}R(x,y) \\
&=\sum_{i=1}^{N-1}(N-i)R_{N,1}(i) \\
&=\sum_{i=1}^{N-1}(N-i)
\left(
(-1)^i \mathcal B_i(N) + \frac{1-\beta^N}{\sqrt{\Delta}(1+\beta^N)}
\left(
2-(-1)^i\mathcal P_i(N)
\right)
\right)\\
&=N\left(
\sum_{i=1}^{N-1}(-1)^i \mathcal B_i(N) + \frac{2(1-\beta^N)}{\sqrt{\Delta}(1+\beta^N)}\sum_{i=1}^{N-1}1
- \frac{(1-\beta^N)}{\sqrt{\Delta}(1+\beta^N)}\sum_{i=1}^{N-1} (-1)^i\mathcal P_i(N)
\right) \\
&~-\left(
\sum_{i=1}^{N-1}(-1)^i i\mathcal B_i(N) + \frac{2(1-\beta^N)}{\sqrt{\Delta}(1+\beta^N)}\sum_{i=1}^{N-1}i
- \frac{(1-\beta^N)}{\sqrt{\Delta}(1+\beta^N)}\sum_{i=1}^{N-1}(-1)^i i\mathcal P_i(N)
\right)\\
&=N\times \left(
\frac{(-1)^{N+1}(\mathcal B_N(N)+\mathcal B_{N-1}(N))-1}{N} + \frac{2(N-1)(1-\beta^N)}{\sqrt{\Delta}(1+\beta^N)} \right.\\
&~\left. -\frac{(1-\beta^N)}{\sqrt{\Delta}(1+\beta^N)}\left( \frac{(-1)^{N+1}(\mathcal P_N(N)+\mathcal P_{N-1}(N))-N}{N}
\right) \right)\\
&~-\frac{(-1)^N(\mathcal B_N(N)-N(\mathcal B_N(N)+\mathcal B_{N-1}(N)))}{N} - \frac{N(N-1)(1-\beta^N)}{\sqrt{\Delta}(1+\beta^N)} \\
&~+ \frac{(1-\beta^N)}{\sqrt{\Delta}(1+\beta^N)} \left( \frac{(-1)^N(\mathcal P_N(N)-N(\mathcal P_N(N)+\mathcal P_{N-1}(N)))-2}{N} \right) \quad \text{(by Lemma \ref{lem:PB41} (\ref{formula:3})--(\ref{formula:6}))}\\
&=-\frac{(-1)^N\mathcal B_N(N)+N}{N} + \frac{(1-\beta^N)}{\sqrt{\Delta}(1+\beta^N)}\frac{(-1)^N\mathcal P_N(N)+N^3-2}{N}\\
&=\frac{1}{N}\left(
(-1)^{N+1}\mathcal B_N(N)-N+\frac{(1-\beta^N)\left((-1)^N\mathcal P_N(N)+N^3-2\right)}{\sqrt{\Delta}(1+\beta^N)}\right) \\
&=\frac{1}{N}\left(
-N+\frac{\sqrt{\Delta}(1+\beta^N)\mathcal B_N(N)+(1-\beta^N)\left(\mathcal P_N(N)+N^3-2\right)}{\sqrt{\Delta}(1+\beta^N)}\right).
\intertext{Using $\mathcal B_N(N)=\frac{\beta^{-N}-\beta^N}{\sqrt{\Delta}}$ and $\mathcal P_N(N)=\beta^{-N}+\beta^N$,}
&=\frac{1}{N}\left(-N+\frac{(1+\beta^N)(\beta^{-N}-\beta^N)+(1-\beta^N)\left(\beta^{-N}+\beta^N+N^3-2\right)}{\sqrt{\Delta}(1+\beta^N)}\right) \\
&=\frac{1}{N}\left(-N+\frac{N^3(1-\beta^N)}{\sqrt{\Delta}(1+\beta^N)}\right) \\
&=\frac{N^2(1-\beta^N)}{\sqrt{\Delta}(1+\beta^N)}-1.
\end{align*}

\end{proof}

\subsubsection{Spanning Tree}

We give the following formula for the number of the spanning tree in terms of the Pisa number.

\begin{theorem} \label{thm:ST}
The number of the spanning tree of $G_N^{(1)}$ is
\[
t(G_N^{(1)})=\frac{\mathcal P_N(N)+2}{N^2}.
\]
\end{theorem}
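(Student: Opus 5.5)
By the Matrix Tree Theorem, written in the form \eqref{eq:mt} with $r=1$, we have
\[
t(G_N^{(1)})=\frac1N\prod_{m=1}^{N-1}\bigl((N-2)+2\cos\theta_m\bigr),\qquad \theta_m=\frac{2\pi m}{N}.
\]
The plan is to evaluate this product with the same factorization used in the proof of Theorem \ref{thm:r1}. There we saw that for $z=e^{i\theta}$,
\[
(N-2)+z+z^{-1}=\alpha(1+\beta z)(1+\beta z^{-1}).
\]
Each factor of the product therefore splits as $\alpha(1+\beta\omega^m)(1+\beta\omega^{-m})$. Since $\alpha\beta=1$, we may equally write it as $\beta^{-1}(1+\beta\omega^m)(1+\beta\omega^{-m})$.

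The key step is the cyclotomic identity
\[
\prod_{m=0}^{N-1}(x-\omega^m)=x^N-1.
\]
Evaluating at $x=-1/\beta$, and using that $N$ is odd, gives
\[
\prod_{m=0}^{N-1}(1+\beta\omega^m)=1+\beta^N .
\]
Because $m\mapsto -m$ permutes the residues modulo $N$, the product of $(1+\beta\omega^{-m})$ over all $m$ is also $1+\beta^N$. We must remove the $m=0$ factor from each of the two products. Each such factor equals $1+\beta$, and the corresponding $\alpha$-factor is also dropped. This yields
\[
\prod_{m=1}^{N-1}\lambda_m=\alpha^{N-1}\frac{(1+\beta^N)^2}{(1+\beta)^2}.
\]

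Next we simplify. Using $\alpha\beta=1$,
\[
\alpha^{N-1}(1+\beta^N)^2=\alpha^{N-1}\bigl(1+2\beta^N+\beta^{2N}\bigr)=\alpha^{-1}\bigl(\alpha^N+2+\beta^N\bigr)=\beta\bigl(\mathcal P_N(N)+2\bigr).
\]
For the denominator, $(1+\beta)^2=1+2\beta+\beta^2$. Since $\beta^2=(N-2)\beta-1$, this reduces to $N\beta$; equivalently, Lemma \ref{lem:PB41}(\ref{formula:ab}) gives $(1+\beta)^2=N(1+\beta)/(1+\alpha)=N\beta$. The factor $\beta$ cancels, so
\[
\prod_{m=1}^{N-1}\lambda_m=\frac{\mathcal P_N(N)+2}{N}.
\]
Dividing by $N$ then gives the claimed formula $t(G_N^{(1)})=(\mathcal P_N(N)+2)/N^2$.

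The main obstacle is bookkeeping rather than anything conceptual. The sign $(-\beta)^N=-\beta^N$ relies on $N$ being odd. The removed $m=0$ factor is $N$ itself, since $(N-2)+2=N$, and this gives a quick consistency check on $(1+\beta)^2=N\beta$. As a sanity check one can take $N=5$: here $G_5^{(1)}$ is a $5$-cycle with $5$ spanning trees, and $\mathcal P_5(5)=123$, so $(123+2)/25=5$ as required.
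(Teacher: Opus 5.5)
Your proof is correct and is essentially the paper's proof: the paper evaluates the same product by applying the identity $\prod_{m=0}^{N-1}\bigl(x+x^{-1}-2\cos\theta_m\bigr)=x^N+x^{-N}-2$ at $x=-\alpha$ and dividing out the $m=0$ factor $-N$. Your factorization into $(1+\beta\omega^{\pm m})$, combined with $\prod_{m=0}^{N-1}(x-\omega^m)=x^N-1$, derives exactly that identity explicitly. Your simplification via $(1+\beta)^2=N\beta$, the sign from $N$ being odd, and the $N=5$ check are all sound.
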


\begin{proof}

Substituting $r=1$ into Equation \eqref{eq:mt} yields
\[
t(G_N^{(1)})=\frac{1}{N}\prod_{i=1}^{N-1}\left( N-2+2
\cos\left(\frac{2\pi i}{N}\right) \right).
\]

Furthermore,
\begin{align*}
\prod_{i=1}^{N-1}\left( N-2+2
\cos\left(\frac{2\pi i}{N}\right) \right)&=
\prod_{i=1}^{N-1}\left(-\left( -N+2-2
\cos\left(\frac{2\pi i}{N}\right) \right) \right) \\
&=(-1)^{N-1}\prod_{i=1}^{N-1}\left( -N+2-2
\cos\left(\frac{2\pi i}{N}\right) \right) \\
&=(-1)^{N-1}\frac{(-\alpha^N)+(-\alpha)^{-N}-2}{-\alpha-\alpha^{-1}-2} \\
&=(-1)^{N-1}\frac{(-\alpha)^N+(-\beta)^N-2}{-\alpha-\beta-2} \\
&=(-1)^{N-1}\frac{(-1)^N\mathcal P_N(N)-2}{-N} \\
&=\frac{\mathcal P_N(N)+2}{N}.
\end{align*}
Therefore, we obtain
\[
t(G_N^{(1)})=\frac{1}{N}\prod_{i=1}^{N-1}\left( N-2+2
\cos\left(\frac{2\pi i}{N}\right) \right)
=\frac{\mathcal P_N(N)+2}{N^2}.
\]
\end{proof}

From Theorem \ref{Kir} and Theorem \ref{thm:ST}, we obtain the following corollary.

\begin{corollary}
\[
t(G_N^{(1)};0,\ell)=
\frac{2(1+\beta^N)}{N^2\sqrt{\Delta}\beta^N}\left(1-\beta^N+(-1)^\ell(\beta^{N-\ell}-\beta^\ell)\right).
\]
\end{corollary}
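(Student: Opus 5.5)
The plan is to combine Kirchhoff's identity (Theorem \ref{Kir}) with the spanning-tree count of Theorem \ref{thm:ST} and the closed form for $R_{N,1}(\ell)$. The one idea is to use the intermediate $\beta$-power form of the resistance that appears inside the proof of Theorem \ref{thm:r1}, namely
\[
R_{N,1}(\ell)=\frac{2}{\sqrt{\Delta}}\cdot\frac{(1-\beta^N)-(-1)^\ell(\beta^\ell-\beta^{N-\ell})}{1+\beta^N},
\]
rather than the final Bejaia--Pisa form. Everything can then be written as a rational function of $\beta$ alone.

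First, I will rewrite the spanning-tree count entirely in terms of $\beta$. Since $\alpha\beta=1$, we have $\mathcal P_N(N)=\alpha^N+\beta^N=\beta^{-N}+\beta^N$. Hence
\[
\mathcal P_N(N)+2=\beta^{-N}+2+\beta^N=\frac{(1+\beta^N)^2}{\beta^N},
\qquad
t(G_N^{(1)})=\frac{(1+\beta^N)^2}{N^2\beta^N}.
\]
This perfect-square factorization is the only nontrivial observation. Its effect is that one factor $(1+\beta^N)$ cancels the denominator of $R_{N,1}(\ell)$, leaving exactly one factor $(1+\beta^N)$ in the numerator, as in the claimed formula.

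Then, by Theorem \ref{Kir}, $t(G_N^{(1)};0,\ell)=R_{N,1}(\ell)\,t(G_N^{(1)})$. Multiplying the two displays gives
\[
t(G_N^{(1)};0,\ell)=\frac{2(1+\beta^N)}{N^2\sqrt{\Delta}\,\beta^N}\Bigl(1-\beta^N+(-1)^\ell(\beta^{N-\ell}-\beta^\ell)\Bigr).
\]
The last step is to rewrite $-(-1)^\ell(\beta^\ell-\beta^{N-\ell})$ as $(-1)^\ell(\beta^{N-\ell}-\beta^\ell)$, which matches the stated form.

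There is no real obstacle beyond spotting the factorization of $\mathcal P_N(N)+2$. I would also check that the range $1\le \ell\le (N-1)/2$ matches the hypothesis under which the $\beta$-form of $R_{N,1}(\ell)$ was derived in Theorem \ref{thm:r1}.
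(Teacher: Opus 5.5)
Your proposal is correct and is essentially the paper's proof: Kirchhoff's identity combined with Theorem \ref{thm:ST}, with the key factorization $\mathcal P_N(N)+2=(1+\beta^N)^2/\beta^N$ cancelling one factor $1+\beta^N$. The only difference is that you start from the intermediate $\beta$-power form of $R_{N,1}(\ell)$ in the proof of Theorem \ref{thm:r1}, whereas the paper starts from the final Bejaia--Pisa form and must convert $\mathcal B_\ell(N)$ and $\mathcal P_\ell(N)$ back into powers of $\beta$ and simplify, so your route skips that step.
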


\begin{proof}
\begin{align*}
t(G_N^{(1)};0,\ell)
&=R_{N,1}(\ell) \cdot t(G_N^{(1)}) \\
&=\frac{\mathcal P_N(N)+2}{N^2}\left( (-1)^\ell \mathcal B_\ell(N)+\frac{1-\beta^N}{\sqrt{\Delta}(1+\beta^N)}\left(2-(-1)^\ell\mathcal P_\ell(N)\right)\right).
\intertext{Using $\mathcal B_\ell(N)=\frac{\beta^{-\ell}-\beta^\ell}{\sqrt{\Delta}}$ and $\mathcal P_\ell(N)=\beta^{-\ell}+\beta^\ell$,}
&=\frac{\beta^{-N}+\beta^N+2}{N^2}\left((-1)^\ell \mathcal B_\ell(N)+\frac{1-\beta^N}{\sqrt{\Delta}(1+\beta^N)}\left(2-(-1)^\ell\mathcal P_\ell(N)\right) \right) \\
&=\frac{(1+\beta^N)^2}{N^2\beta^N}\left(\frac{(-1)^\ell(\beta^{-\ell}-\beta^\ell)}{\sqrt{\Delta}}+\frac{1-\beta^N}{\sqrt{\Delta}(1+\beta^N)}\left\{2-(-1)^\ell(\beta^{-\ell}+\beta^\ell)\right\}\right)\\
&=\frac{1+\beta^N}{N^2\sqrt{\Delta}\beta^N}
\left[(-1)^\ell(\beta^{-\ell}-\beta^\ell)(1+\beta^N)+(1-\beta^N)\left\{2-(-1)^\ell(\beta^{-\ell}+\beta^\ell)\right\}\right] \\
&=\frac{1+\beta^N}{N^2\sqrt{\Delta}\beta^N}\left[2(1-\beta^N)+2(-1)^\ell(\beta^{N-\ell}-\beta^\ell)\right] \\
&=\frac{2(1+\beta^N)}{N^2\sqrt{\Delta}\beta^N}\left(1-\beta^N+(-1)^\ell(\beta^{N-\ell}-\beta^\ell)\right).
\end{align*}

\end{proof}

\subsection{Case: $r=2$}

Next, we consider the case $r=2$.
In this case, $G_N^{(2)}$ is a non-trivial connected graph for $N \ge 7$.
On the other hand, for $N=5$, the degree of each vertex is $0$, resulting in a discrete graph.
Therefore, we assume $N \ge 7$.

We write $G_{N,2} = G_N^{(2)}$ and let $R_{N,2}(\ell)$ denote the effective resistance between two vertices at distance $\ell$ of $G_N^{(2)}$.

From Theorem \ref{mainthm1},
\[
R_{N,2}(\ell)
=
\frac{4}{N}
\sum_{m=1}^{(N-1)/2}
\frac{
1-\cos(\ell\theta_m)
}{
(N-4)+2\cos\theta_m+2\cos(2\theta_m)
},
\qquad
\theta_m=\frac{2\pi m}{N}.
\]
From the symmetry of the summand, we can write
\[
R_{N,2}(\ell)
=
\frac{2}{N}
\sum_{m=1}^{N-1}
\frac{
1-\cos(\ell\theta_m)
}{
(N-4)+2\cos\theta_m+2\cos(2\theta_m)
}.
\]

Let $t_m:=2\cos\theta_m$.
Then,
\[
(N-4)+2\cos\theta_m+2\cos(2\theta_m)
=
t_m^2+t_m+(N-6).
\]

Therefore, it can be seen that for $r=2$, the denominator of the effective resistance reduces to a quadratic polynomial.
Unlike the case of $r=1$, this leads to the emergence of a complex sequence structure.

The roots of the quadratic polynomial $t^2+t+(N-6)$ are given by
\[
\tau_\pm=\frac{-1\pm\sqrt{25-4N}}{2}.
\]
Setting
\[
s:=\sqrt{4N-25},
\qquad
\tau:=\frac{-1+is}{2},
\]
we have
\[
\tau_+=\tau,\qquad \tau_-=\overline{\tau},
\qquad
\tau_+-\tau_-=is.
\]

\begin{lemma}\label{lem:r2-reduction}
Let
\[
S(\tau)
:=
\frac1N
\sum_{m=1}^{N-1}
\frac{1-\cos(\ell\theta_m)}{t_m-\tau}.
\]
Then, it follows that
\[
R_{N,2}(\ell)
=
\frac{4}{s}\Im S(\tau),
\]
where $\Im(\cdot)$ denotes the imaginary part.
\end{lemma}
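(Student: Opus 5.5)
Starting from the representation of $R_{N,2}(\ell)$ recorded just before the lemma, the whole argument is a partial-fraction decomposition of the reciprocal of the quadratic denominator. Since $t^2+t+(N-6)=(t-\tau)(t-\overline{\tau})$ with $\tau-\overline{\tau}=is$, we have
\[
\frac{1}{t_m^2+t_m+(N-6)}
=\frac{1}{is}\left(\frac{1}{t_m-\tau}-\frac{1}{t_m-\overline{\tau}}\right).
\]
Note that $s=\sqrt{4N-25}>0$ is real for $N\ge 7$, so $\tau\notin\mathbb R$ and no $t_m-\tau$ vanishes.

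The next step uses the fact that $t_m=2\cos\theta_m$ and $1-\cos(\ell\theta_m)$ are real. Hence
\[
\frac{1-\cos(\ell\theta_m)}{t_m-\overline{\tau}}=\overline{\left(\frac{1-\cos(\ell\theta_m)}{t_m-\tau}\right)}.
\]
Summing over $m=1,\dots,N-1$ and multiplying by $1/N$, the bracket becomes $S(\tau)-\overline{S(\tau)}=2i\,\Im S(\tau)$. Therefore
\[
\frac{1}{N}\sum_{m=1}^{N-1}\frac{1-\cos(\ell\theta_m)}{t_m^2+t_m+(N-6)}=\frac{2i\,\Im S(\tau)}{is}=\frac{2}{s}\Im S(\tau).
\]

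Finally, multiplying by the factor $2$ from $R_{N,2}(\ell)=\frac{2}{N}\sum_{m=1}^{N-1}(\cdots)$ gives $R_{N,2}(\ell)=\frac{4}{s}\Im S(\tau)$.

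No step is a real obstacle; the argument is routine once the factorization is in place. The only places to be careful are the sign and orientation of $\tau-\overline{\tau}=is$ and the bookkeeping of the factor $2$. Getting either wrong would only produce an overall sign or scaling error, so I would check the result against the positivity of $R_{N,2}(\ell)$.
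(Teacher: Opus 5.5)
Your proposal is correct and matches the paper's proof: both use the partial-fraction decomposition with $\tau-\overline{\tau}=is$, the conjugate symmetry $S(\overline{\tau})=\overline{S(\tau)}$ coming from the reality of $t_m$ and $1-\cos(\ell\theta_m)$, and the factor $2$ from the full-range sum over $m$. You also check explicitly that $s>0$ for $N\ge 7$, so the poles are non-real and no denominator $t_m-\tau$ vanishes; the paper leaves this implicit.
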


\begin{proof}
From partial fraction decomposition, we obtain
\[
\frac{1}{t^2+t+(N-6)}
=
\frac{1}{\tau_+-\tau_-}
\left(
\frac{1}{t-\tau_+}
-
\frac{1}{t-\tau_-}
\right).
\]
Therefore,
\[
R_{N,2}(\ell)
=
\frac{2}{\tau_+-\tau_-}
\left(S(\tau_+)-S(\tau_-)\right).
\]

Since each $t_m$ and $1-\cos(\ell\theta_m)$ is a real number, $S(\overline{\tau})=\overline{S(\tau)}$ holds.
Therefore, we obtain
\[
R_{N,2}(\ell)
=
\frac{2}{is}
\left(S(\tau)-\overline{S(\tau)}\right)
=
\frac{4}{s}\Im S(\tau).
\]
\end{proof}

\begin{remark}
In the case of $r=2$, although complex-valued functions and complex sequences are used in the course of the proof, the resulting effective resistance $R_{N,2}(\ell)$ is real-valued.
This is because the poles $\tau$ and $\overline{\tau}$ appearing in the partial fraction decomposition are complex conjugates of each other, causing the real parts to cancel out in the corresponding sum.
Consequently, the effective resistance is expressed in a form that extracts only the imaginary part.
\end{remark}

We estimate $S(\tau)$ using the following lemma.

\begin{lemma}\label{lem:root-of-unity}
Let $\rho\in\mathbb{C}$ be a complex number such that $\rho^N\neq 1$, and let $\omega=e^{2\pi i/N}$.
Then, for $k=0,1,\dots,N-1$,
\[
\frac1N
\sum_{m=0}^{N-1}
\frac{\omega^{mk}}{\omega^m-\rho}
=
\frac{\rho^{k-1}}{1-\rho^N}
\]
holds.

However, when $k=0$,
the right-hand side is interpreted as
\[
\frac{\rho^{N-1}}{1-\rho^N}.
\]
\end{lemma}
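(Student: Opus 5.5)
The plan is to expand $1/(\omega^m-\rho)$ as a geometric series and use orthogonality of characters (Lemma \ref{lem31}), treating $|\rho|>1$ and $|\rho|<1$ separately and then removing the restriction by rational-function identity. A cleaner alternative, which I would actually use, is a finite algebraic identity valid for every $\rho$ with $\rho^N\ne1$.

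Since $(\omega^m)^N=1$, we have
\[
1-\rho^N=(\omega^m)^N-\rho^N=(\omega^m-\rho)\sum_{j=0}^{N-1}\omega^{m(N-1-j)}\rho^{j}.
\]
Hence
\[
\frac{1}{\omega^m-\rho}=\frac{1}{1-\rho^N}\sum_{j=0}^{N-1}\rho^{j}\,\omega^{-m(j+1)},
\]
which is valid because $\omega^m\ne\rho$; this follows from $\rho^N\ne1$. Multiplying by $\omega^{mk}$ and averaging over $m$ gives
\[
\frac1N\sum_{m=0}^{N-1}\frac{\omega^{mk}}{\omega^m-\rho}=\frac{1}{1-\rho^N}\sum_{j=0}^{N-1}\rho^j\cdot\frac1N\sum_{m=0}^{N-1}\omega^{m(k-j-1)}.
\]
By the orthogonality computation in the proof of Lemma \ref{lem31}, the inner average equals $1$ if $j+1\equiv k \pmod N$ and $0$ otherwise.

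For $1\le k\le N-1$, the unique $j\in\{0,\dots,N-1\}$ with $j\equiv k-1$ is $j=k-1$, giving $\rho^{k-1}/(1-\rho^N)$. For $k=0$, the congruence forces $j=N-1$, giving $\rho^{N-1}/(1-\rho^N)$, which is exactly the stated interpretation.

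The only delicate points are the index bookkeeping modulo $N$, in particular the $k=0$ case, and confirming that no term $\omega^m-\rho$ vanishes. Everything else is routine.
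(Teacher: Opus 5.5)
Your proof is correct, and it takes a different route from the paper's. The paper works in the variable $z$. It takes the partial fraction decomposition of the rational function $z^{k-1}/(z^N-1)$ over its simple poles $\omega^m$, whose coefficients are $\omega^{m(k-1)}/(N\omega^{m(N-1)})=\omega^{mk}/N$ (a computation it leaves implicit), and then specializes $z=\rho$. You work in the index $m$ instead. Using $\omega^{mN}=1$ and the factorization of $\omega^{mN}-\rho^N$, you write each $1/(\omega^m-\rho)$ as an explicit finite Fourier polynomial in $\omega^{-m}$. You then extract the $k$-th coefficient with the orthogonality relation already proved in Lemma \ref{lem31}. In effect you compute the discrete Fourier transform of $m\mapsto 1/(\omega^m-\rho)$ directly, whereas the paper reads it off the partial fraction expansion; these are dual views of the same identity. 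The paper's argument is shorter to state and gives an identity of rational functions in one stroke. However, it needs $z^{k-1}$ to be a polynomial of degree less than $N$, so the case $k=0$ must be handled by replacing $k$ with $N$; this is why the statement carries a separate ``interpretation'' for $k=0$. Your argument uses only material already established in the paper and treats every $k$ uniformly through the congruence $j\equiv k-1 \pmod N$, so the $k=0$ convention comes out automatically. It also checks explicitly that $\omega^m-\rho\neq 0$, which the paper takes for granted.
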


\begin{proof}
Performing a partial fraction decomposition using the identity
\[
\prod_{m=0}^{N-1}(z-\omega^m)=z^N-1,
\]
we obtain
\[
\frac{z^{k-1}}{z^N-1}
=
\frac1N
\sum_{m=0}^{N-1}
\frac{\omega^{mk}}{z-\omega^m}.
\]
Substituting $z=\rho$ into this expression and changing the sign of the denominator, the theorem follows.
\end{proof}

\begin{lemma}\label{lem:S-evaluation}
Let $\rho$ be a solution to
\[
\rho+\rho^{-1}=\tau,
\qquad |\rho|<1
\]
and let $d:=\rho-\rho^{-1}$.
Therefore,
\[
S(\tau)
=
\frac{
1+\rho^N-\rho^\ell-\rho^{N-\ell}
}{
d(1-\rho^N)
}
\]
holds.
\end{lemma}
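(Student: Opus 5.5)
The plan is to factor the denominator $t_m-\tau$ through $z=\omega^m$ and then apply Lemma \ref{lem:root-of-unity} termwise.

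\emph{Existence and admissibility of $\rho$.} The roots of $\rho^2-\tau\rho+1=0$ have product $1$. They cannot both lie on the unit circle. If they did, $\tau=\rho+\rho^{-1}=\rho+\overline{\rho}$ would be real, but $\tau=(-1+is)/2$ with $s>0$. Hence exactly one root satisfies $|\rho|<1$, and in particular $\rho^N\neq 1$, $\rho^{-N}\neq1$, and $\rho\neq\omega^m$. These are exactly the hypotheses needed to apply Lemma \ref{lem:root-of-unity} to both $\rho$ and $\rho^{-1}$.

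\emph{Extending the sum and factoring.} The $m=0$ term of $S(\tau)$ vanishes because $1-\cos(0)=0$, so the sum may be taken over $m=0,\dots,N-1$. With $z=\omega^m$ we have $t_m=z+z^{-1}$, and
\[
z+z^{-1}-\rho-\rho^{-1}=\frac{(z-\rho)(z-\rho^{-1})}{z}.
\]
Partial fractions then give
\[
\frac{1}{t_m-\tau}=\frac{z}{(z-\rho)(z-\rho^{-1})}=\frac{1}{d}\left(\frac{\rho}{z-\rho}-\frac{\rho^{-1}}{z-\rho^{-1}}\right),\qquad d=\rho-\rho^{-1}.
\]
We also write the numerator as
\[
1-\cos(\ell\theta_m)=1-\tfrac12\omega^{m\ell}-\tfrac12\omega^{m(N-\ell)},
\]
so that every exponent lies in $\{0,\ell,N-\ell\}\subset\{0,\dots,N-1\}$.

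\emph{Applying Lemma \ref{lem:root-of-unity}.} Write $F(\sigma,k)$ for the value $\sigma^{k-1}/(1-\sigma^N)$ given by that lemma, with the stated convention $F(\sigma,0)=\sigma^{N-1}/(1-\sigma^N)$. Then
\[
S(\tau)=\frac1d\Bigl(\rho\,[F(\rho,0)-\tfrac12F(\rho,\ell)-\tfrac12F(\rho,N-\ell)]-\rho^{-1}[F(\rho^{-1},0)-\tfrac12F(\rho^{-1},\ell)-\tfrac12F(\rho^{-1},N-\ell)]\Bigr).
\]

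\emph{Simplifying.} The $\rho$-bracket, multiplied by $\rho$, becomes $\bigl(\rho^N-\tfrac12(\rho^\ell+\rho^{N-\ell})\bigr)/(1-\rho^N)$. For the $\rho^{-1}$-bracket, multiplying numerator and denominator by $\rho^N$ turns it into $\bigl(1-\tfrac12(\rho^{N-\ell}+\rho^{\ell})\bigr)/(\rho^N-1)$. Subtracting the second from the first gives
\[
\frac{1+\rho^N-\rho^\ell-\rho^{N-\ell}}{1-\rho^N},
\]
and dividing by $d$ yields the claim.

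The only delicate point is the bookkeeping: keeping the $k=0$ convention of Lemma \ref{lem:root-of-unity} straight, and converting the $\rho^{-1}$ terms by multiplying through by $\rho^N$ so the two fractions combine over the common denominator $1-\rho^N$. Everything else is mechanical.
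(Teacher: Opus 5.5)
Your proof is correct and follows the paper's argument. You use the same factorization $z^2-\tau z+1=(z-\rho)(z-\rho^{-1})$, the same partial fractions with $d=\rho-\rho^{-1}$, the same extension of the sum to $m=0$, and Lemma \ref{lem:root-of-unity} at $k=0,\ell,N-\ell$. In addition, you justify that a root with $|\rho|<1$ exists (so $\rho^{\pm N}\neq 1$) and carry out the final simplification explicitly, both of which the paper leaves implicit.
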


\begin{proof}
Let $\omega=e^{2\pi i/N}$ and $z_m=\omega^m=e^{i\theta_m}$.
Then, $t_m=z_m+z_m^{-1}$.

Furthermore, from
\[
z^2-\tau z+1=(z-\rho)(z-\rho^{-1}),
\]
we obtain
\[
\frac{1}{t_m-\tau}
=
\frac{z_m}{(z_m-\rho)(z_m-\rho^{-1})}
=
\frac1d
\left(
\frac{\rho}{z_m-\rho}
-
\frac{\rho^{-1}}{z_m-\rho^{-1}}
\right).
\]

Moreover,
\[
1-\cos(\ell\theta_m)
=
\frac{2-z_m^\ell-z_m^{-\ell}}{2}.
\]

Note that since the term for $m=0$ is $1-\cos(\ell\theta_0)=0$, adding it to the sum does not change the value.
Therefore,
\[
S(\tau)
=
\frac{1}{2N}
\sum_{m=0}^{N-1}
(2-z_m^\ell-z_m^{-\ell})
\frac1d
\left(
\frac{\rho}{z_m-\rho}
-
\frac{\rho^{-1}}{z_m-\rho^{-1}}
\right)
\]
holds.

Applying Lemma \ref{lem:root-of-unity} for $k=0$, $\ell$, and $N-\ell$ and simplifying, we obtain
\[
S(\tau)
=
\frac{
1+\rho^N-\rho^\ell-\rho^{N-\ell}
}{
d(1-\rho^N)
}.
\]
\end{proof}

Here, we define the $r=2$ versions of the Pisa and Bejaia numbers by
\[
\mathcal P_k^{(2)}
:=
\rho^k+\rho^{-k},
\qquad
\mathcal B_k^{(2)}
:=
\frac{\rho^k-\rho^{-k}}{d}
\qquad (k\ge0).
\]

These can be regarded as complex analogues of the Bejaia and Pisa numbers appearing in the case $r=1$. 
Indeed, while the corresponding characteristic equation has real roots for $r=1$, complex roots arise for $r=2$, and thus the corresponding generalized Bejaia--Pisa sequences also become complex sequences.

Moreover, since $\rho+\rho^{-1}=\tau$, $\rho$ and $\rho^{-1}$ are the roots of $x^2-\tau x+1=0$.
Therefore, both $\mathcal P_k^{(2)}$ and $\mathcal B_k^{(2)}$ satisfy the complex Fibonacci-type recurrence relation $x_{k+2}=
\tau x_{k+1}-x_k$.
Therefore,
\[
\mathcal P_0^{(2)}=2,
\qquad
\mathcal P_1^{(2)}=\tau,
\]
\[
\mathcal B_0^{(2)}=0,
\qquad
\mathcal B_1^{(2)}=1.
\]

\begin{corollary}
The generalized Pisa sequence defined for $r=2$,
\[
\mathcal P_k^{(2)}
=
\rho^k+\rho^{-k},
\]
and the generalized Bejaia sequence,
\[
\mathcal B_k^{(2)}
=
\frac{\rho^k-\rho^{-k}}{d},
\]
both satisfy
\[
x_{k+2}
=
\tau x_{k+1}-x_k,
\]
where $\tau=\frac{-1+i\sqrt{4N-25}}{2}$.
\end{corollary}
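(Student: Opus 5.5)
The statement is the Corollary asserting that both $\mathcal P_k^{(2)}$ and $\mathcal B_k^{(2)}$ satisfy $x_{k+2}=\tau x_{k+1}-x_k$. The plan is to prove this directly from the characteristic equation satisfied by $\rho$ and $\rho^{-1}$.

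By the choice of $\rho$ in Lemma \ref{lem:S-evaluation}, we have $\rho+\rho^{-1}=\tau$. Multiplying by $\rho$ gives $\rho^2-\tau\rho+1=0$. Multiplying by $\rho^{-1}$ instead gives $\rho^{-2}-\tau\rho^{-1}+1=0$. So both $\mu=\rho$ and $\mu=\rho^{-1}$ are roots of $x^2-\tau x+1$. Multiplying $\mu^2=\tau\mu-1$ by $\mu^k$ then yields, for every integer $k\ge0$,
\[
\mu^{k+2}=\tau\mu^{k+1}-\mu^k
\qquad(\mu\in\{\rho,\rho^{-1}\}).
\]

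The recurrence is linear and homogeneous, so any linear combination $a\rho^k+b\rho^{-k}$ with constants $a,b$ independent of $k$ satisfies it as well. Taking $a=b=1$ gives $\mathcal P_k^{(2)}$. Taking $a=1/d$ and $b=-1/d$ gives $\mathcal B_k^{(2)}$.

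The one point needing care is that $d=\rho-\rho^{-1}\neq0$, so that $\mathcal B_k^{(2)}$ is well defined. If $d=0$, then $\rho=\pm1$ and $\tau=\pm2$. But $\tau=\frac{-1+i\sqrt{4N-25}}{2}$ has nonzero imaginary part for $N\ge7$, so this cannot happen. The same fact gives $|\rho|\neq1$, which makes the choice $|\rho|<1$ possible.

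Finally, the recurrence together with the initial values determines each sequence completely. These are $\mathcal P_0^{(2)}=2$, $\mathcal P_1^{(2)}=\rho+\rho^{-1}=\tau$, $\mathcal B_0^{(2)}=0$ and $\mathcal B_1^{(2)}=(\rho-\rho^{-1})/d=1$, matching the values listed before the corollary.
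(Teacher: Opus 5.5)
Your proof is correct and follows essentially the same route as the paper: both $\rho$ and $\rho^{-1}$ are roots of $x^2-\tau x+1$, and the recurrence passes to $\mathcal P_k^{(2)}$ and $\mathcal B_k^{(2)}$ by taking the sum of the two power sequences, and their difference divided by $d$. Your additional check that $d\neq 0$ and $|\rho|\neq 1$ (because $\tau\notin\mathbb{R}$ for $N\ge 7$) is a point the paper leaves implicit.
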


\begin{proof}
From $\rho+\rho^{-1}=\tau$, $\rho$ and $\rho^{-1}$ are solutions of $x^2-\tau x+1=0$.
Therefore,
\[
\rho^{k+2}
=
\tau\rho^{k+1}-\rho^k,
\qquad
\rho^{-(k+2)}
=
\tau\rho^{-(k+1)}-\rho^{-k}
\]
holds.

Adding these two expressions yields
\[
\mathcal P_{k+2}^{(2)}
=
\tau\mathcal P_{k+1}^{(2)}
-
\mathcal P_k^{(2)}.
\]

Similarly,
subtracting these two expressions and dividing by $d=\rho-\rho^{-1}$ yields
\[
\mathcal B_{k+2}^{(2)}
=
\tau\mathcal B_{k+1}^{(2)}
-
\mathcal B_k^{(2)}.
\]
\end{proof}

Furthermore, let
\[
C_N
:=
\frac{
2+\mathcal P_N^{(2)}+d\mathcal B_N^{(2)}
}{
d\left(
2-\mathcal P_N^{(2)}-d\mathcal B_N^{(2)}
\right)
}.
\]
From the above, Theorem \ref{thm:r2-chair} follows.

\noindent
\textbf{Proof of Theorem \ref{thm:r2-chair}}

From Lemma \ref{lem:S-evaluation},
\[
S(\tau)
=
\frac{
1+\rho^N-\rho^\ell-\rho^{N-\ell}
}{
d(1-\rho^N)
}.
\]

Using $\rho^{N-\ell}=\rho^N\rho^{-\ell}$, we obtain
\[
1+\rho^N-\rho^\ell-\rho^{N-\ell}
=
(1+\rho^N)-(\rho^\ell+\rho^N\rho^{-\ell}).
\]

Moreover, from
\[
\rho^\ell
=
\frac{\mathcal P_\ell^{(2)}+d\mathcal B_\ell^{(2)}}{2},
\qquad
\rho^{-\ell}
=
\frac{\mathcal P_\ell^{(2)}-d\mathcal B_\ell^{(2)}}{2},
\]
we obtain
\[
\rho^\ell+\rho^N\rho^{-\ell}
=
\frac{
(1+\rho^N)\mathcal P_\ell^{(2)}
+
(1-\rho^N)d\mathcal B_\ell^{(2)}
}{2}.
\].

Therefore,
\[
S(\tau)
=
\frac{1+\rho^N}{2d(1-\rho^N)}
(2-\mathcal P_\ell^{(2)})
-
\frac12\mathcal B_\ell^{(2)}
\]
holds.

Let
\[
C_N'
:=
\frac{1+\rho^N}{d(1-\rho^N)}.
\]
Then,
\[
S(\tau)
=
C_N'
-
\frac{C_N'}{2}\mathcal P_\ell^{(2)}
-
\frac12\mathcal B_\ell^{(2)}.
\]

Moreover, from
\[
\rho^N
=
\frac{\mathcal P_N^{(2)}+d\mathcal B_N^{(2)}}{2},
\]
we obtain $C_N'=C_N$.

Therefore,
\[
S(\tau)
=
C_N
-
\frac{C_N}{2}\mathcal P_\ell^{(2)}
-
\frac12\mathcal B_\ell^{(2)}
\]
holds.

Finally, from Lemma \ref{lem:r2-reduction},
\[
R_{N,2}(\ell)
=
\frac4s\Im S(\tau),
\qquad s=\sqrt{4N-25}
\]
holds.
Therefore, we obtain
\[
R_{N,2}(\ell)
=
\frac{4}{\sqrt{4N-25}}
\Im\left(
C_N
-
\frac{C_N}{2}\mathcal P_\ell^{(2)}
-
\frac12\mathcal B_\ell^{(2)}
\right).
\]
\qed

Furthermore, from Theorem \ref{thm:NW} and Theorem \ref{thm:r2-chair},
we give the following closed formula for the average hitting times of $G_N^{(2)}$.

\begin{corollary}
\[
h(0, \ell)
=\frac{2N(N-5)}{\sqrt{4N-25}}
\Im\left(
C_N - \frac{C_N}{2}\mathcal P_\ell^{(2)} - \frac12\mathcal B_\ell^{(2)}
\right).
\]
\end{corollary}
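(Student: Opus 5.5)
The statement to prove is the last corollary: the closed formula for the average hitting time $h(0,\ell)$ on $G_N^{(2)}$. The plan is a direct combination of two earlier results: Theorem \ref{thm:NW} of Nash-Williams, specialised through vertex-transitivity to $h(x,y)=|E|R(x,y)$, and the closed form of Theorem \ref{thm:r2-chair}.

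First, I will record the edge count for $r=2$. From Section \ref{sec02}, $G_N^{(2)}$ is regular of degree $N-1-2r=N-5$, so
\[
|E|=\frac{N(N-5)}{2}.
\]

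Second, I will justify $h(0,\ell)=h(\ell,0)$. This uses the inversion-plus-translation automorphism $x\mapsto \ell-x$ of the circulant graph, which swaps $0$ and $\ell$. Hence Theorem \ref{thm:NW} gives
\[
2h(0,\ell)=2|E|R_{N,2}(\ell), \qquad\text{so}\qquad h(0,\ell)=\frac{N(N-5)}{2}\,R_{N,2}(\ell).
\]

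Third, I will substitute the expression for $R_{N,2}(\ell)$ from Theorem \ref{thm:r2-chair}. This gives
\[
h(0,\ell)=\frac{N(N-5)}{2}\cdot\frac{4}{\sqrt{4N-25}}\,\Im\Bigl(C_N-\tfrac{C_N}{2}\mathcal P_\ell^{(2)}-\tfrac12\mathcal B_\ell^{(2)}\Bigr).
\]
Since the prefactor is real, it passes through $\Im$. It simplifies to $2N(N-5)/\sqrt{4N-25}$, which is the claimed formula.

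There is no real obstacle here; the only point needing care is that $h(0,\ell)=h(\ell,0)$. Vertex-transitivity alone gives only an automorphism sending $0$ to $\ell$, not one swapping the two vertices. The explicit reflection $x\mapsto\ell-x$ supplies the swap, and it preserves circular distances, hence adjacency.
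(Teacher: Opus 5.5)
Your proposal is correct and follows the paper's proof: use Theorem~\ref{thm:NW} with $h(0,\ell)=h(\ell,0)$ to get $h(0,\ell)=|E|\,R_{N,2}(\ell)$ with $|E|=N(N-5)/2$, then substitute the closed form of Theorem~\ref{thm:r2-chair}. The paper justifies $h(0,\ell)=h(\ell,0)$ only by appealing to vertex-transitivity; this is true for vertex-transitive graphs in general (a theorem of Aldous) but not immediate, and your explicit swap automorphism $x\mapsto \ell-x$ gives a self-contained justification.
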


\begin{proof}

\begin{align*}
h(0, \ell)
&=|E|R_{N,1}(\ell) \\
&=\frac{2N(N-5)}{\sqrt{4N-25}}
\Im\left(
C_N - \frac{C_N}{2}\mathcal P_\ell^{(2)} - \frac12\mathcal B_\ell^{(2)}
\right).
\end{align*}

\end{proof}

\subsubsection{Kirchhoff Index}

Before calculating the Kirchhoff index, we prepare some formulas.

\begin{lemma}\label{lem:PB42}
\begin{enumerate}
    \item\label{formula2:rho} $\sum_{i=1}^{N-1}i\rho^i=\frac{1}{1-\rho}\left( \sum_{i=1}^N\rho^i-N\rho^N \right)$

    \item\label{formula2:rho2} $\sum_{i=1}^{N-1}i\rho^{-i}=\frac{1}{1-\rho^{-1}}\left( \sum_{i=1}^N\rho^{-i}-N\rho^{-N} \right)$

    \item\label{formula2:1} $\sum_{i=1}^{N-1}\mathcal B_i^{(2)}=\frac{\mathcal B_1^{(2)}-\mathcal B_N^{(2)}+\mathcal B_{N-1}^{(2)}}{2-\tau}$

    \item\label{formula2:2} $\sum_{i=1}^{N-1}\mathcal P_i^{(2)}=\frac{\mathcal P_1^{(2)}-\mathcal P_N^{(2)}+\mathcal P_{N-1}^{(2)}-2}{2-\tau}$

    \item\label{formula2:3} $\sum_{i=1}^{N-1}i\mathcal B_i^{(2)}=\frac{\mathcal B_N^{(2)}-N(\mathcal B_N^{(2)}-\mathcal B_{N-1}^{(2)})}{2-\tau}$

    \item\label{formula2:4} $\sum_{i=1}^{N-1}i\mathcal P_i^{(2)}=\frac{\mathcal P_N^{(2)}-N(\mathcal P_N^{(2)}-\mathcal P_{N-1}^{(2)})-2}{2-\tau}$

    \item\label{formula2:5} $C_N\left(\mathcal P_N^{(2)}-2\right)+\mathcal B_N^{(2)}=0$.
\end{enumerate}

\end{lemma}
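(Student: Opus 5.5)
The plan is to reduce every item to geometric-series identities in $\rho$ and $\rho^{-1}$, as in Lemma \ref{lem:PB41}. The one new ingredient replacing $(1+\alpha)(1+\beta)=N$ is
\[
(1-\rho)(1-\rho^{-1})=2-(\rho+\rho^{-1})=2-\tau .
\]
All denominators will be brought to $2-\tau$ through this identity. Note that $\rho\neq 1$ and $2-\tau\neq0$, since $\tau$ is non-real.

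For items (\ref{formula2:rho}) and (\ref{formula2:rho2}), I will repeat the telescoping trick used for (\ref{formula:1}) in Lemma \ref{lem:PB41}. Set $X=\sum_{i=1}^{N-1}i\rho^i$ and compute $X-\rho X$. Shifting the index collapses this to $\sum_{i=1}^{N}\rho^i-N\rho^N$, and dividing by $1-\rho$ gives (\ref{formula2:rho}). The same argument with $\rho^{-1}$ in place of $\rho$ gives (\ref{formula2:rho2}).

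For items (\ref{formula2:1}) and (\ref{formula2:2}), I will use
\[
\sum_{i=1}^{N-1}\rho^{\pm i}=\frac{\rho^{\pm1}-\rho^{\pm N}}{1-\rho^{\pm1}}.
\]
I then put both fractions over the common denominator $(1-\rho)(1-\rho^{-1})=2-\tau$ by multiplying the $\rho$-term by $1-\rho^{-1}$ and the $\rho^{-1}$-term by $1-\rho$. The numerator expands to
\[
(\rho^{\pm}\text{-antisymmetric or symmetric combination}) = (\rho\mp\rho^{-1})-(\rho^N\mp\rho^{-N})+(\rho^{N-1}\mp\rho^{1-N}) + c,
\]
where the constant is $c=0$ in the antisymmetric case and $c=-2$ in the symmetric case. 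Dividing by $d$ in the Bejaia case, and leaving it undivided in the Pisa case, gives exactly $\mathcal B_1-\mathcal B_N+\mathcal B_{N-1}$ and $\mathcal P_1-\mathcal P_N+\mathcal P_{N-1}-2$.

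Items (\ref{formula2:3}) and (\ref{formula2:4}) follow the same pattern, starting from (\ref{formula2:rho}) and (\ref{formula2:rho2}). Each of these is a quotient with denominator $(1-\rho^{\pm1})^2$ coming from $\sum_{i=1}^N\rho^{\pm i}$, plus a term $-N\rho^{\pm N}/(1-\rho^{\pm1})$. I will multiply through so that every piece sits over $(2-\tau)^2$ or $(2-\tau)$. Then I simplify using $(1-\rho)(1-\rho^{-1})^{\,}\rho = -(1-\rho)^{\,}(1-\rho^{-1})\cdot(\rho/(\rho-1))$-type cancellations, more concretely $\rho(1-\rho^{-1})=\rho-1$ and $\rho^{-1}(1-\rho)=\rho^{-1}-1$. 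After cancelling one factor of $2-\tau$, the $N$-independent pieces should leave $\mathcal B_N$ (respectively $\mathcal P_N-2$), and the $N$-weighted pieces should leave $-N(\mathcal B_N-\mathcal B_{N-1})$ (respectively $-N(\mathcal P_N-\mathcal P_{N-1})$).

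This bookkeeping is the step I expect to be the main obstacle. It is the analogue of the long computations for (\ref{formula:5}) and (\ref{formula:6}). If it gets messy, I will instead verify the claimed identities by induction on $N$ using the recurrence $x_{k+2}=\tau x_{k+1}-x_k$. Concretely, I check that the right-hand sides change by exactly $N\mathcal B_N$ (respectively $N\mathcal P_N$) when $N\to N+1$, and check the base case $N=1$.

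Finally, item (\ref{formula2:5}) follows from the representation $C_N=\frac{1+\rho^N}{d(1-\rho^N)}$, which was established as $C_N'=C_N$ in the proof of Theorem \ref{thm:r2-chair}, together with the factorization
\[
\mathcal P_N^{(2)}-2=\rho^N+\rho^{-N}-2=-(1-\rho^N)(1-\rho^{-N}).
\]
Combining these,
\[
C_N\bigl(\mathcal P_N^{(2)}-2\bigr)=-\frac{(1+\rho^N)(1-\rho^{-N})}{d}=-\frac{\rho^N-\rho^{-N}}{d}=-\mathcal B_N^{(2)},
\]
which is the claim.
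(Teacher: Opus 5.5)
Your proposal is correct and follows the paper's proof. Items (1)--(6) use the same telescoping and common-denominator computations built on $(1-\rho)(1-\rho^{-1})=2-\tau$, and your proof of (7) via $C_N=\frac{1+\rho^N}{d(1-\rho^N)}$ and $\mathcal P_N^{(2)}-2=-(1-\rho^N)(1-\rho^{-N})$ is only a cosmetic variant of the paper's direct expansion using $(\mathcal P_N^{(2)})^2-d^2(\mathcal B_N^{(2)})^2=4$.

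Two small points. The displayed ``cancellation'' identity in your paragraph on (5)--(6) is false as written, although the concrete identities you give right after it do yield the needed $\rho(1-\rho^{-1})^2=-(2-\tau)$. Also, if you use the induction fallback, induct on the upper summation limit with $\rho$ held fixed, since $\rho$ itself depends on $N$.
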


\begin{proof}

\begin{itemize}
    
\item[(\ref{formula2:rho})]

\begin{align*}
\sum_{i=1}^{N-1}i\rho^i-\rho\sum_{i=1}^{N-1}i\rho^i
&=\rho+\rho^2+\cdots+\rho^{N-1}-(N-1)\rho^N \\
&=\sum_{i=1}^N\rho^i-N\rho^N.
\end{align*}
From the above, we obtain
\[
\sum_{i=1}^{N-1}i\rho^i=\frac{1}{1-\rho}\left( \sum_{i=1}^N\rho^i-N\rho^N \right).
\]

\item[(\ref{formula2:rho2})]

\begin{align*}
\sum_{i=1}^{N-1}i\rho^{-i}-\rho^{-1}\sum_{i=1}^{N-1}i\rho^{-i}
&=\rho^{-1}+\rho^{-2}+\cdots+\rho^{-(N-1)}-(N-1)\rho^{-N} \\
&=\sum_{i=1}^N\rho^{-i}-N\rho^{-N}.
\end{align*}
From the above, we obtain
\[
\sum_{i=1}^{N-1}i\rho^{-i}=\frac{1}{1-\rho^{-1}}\left( \sum_{i=1}^N\rho^{-i}-N\rho^{-N} \right).
\]

\item[(\ref{formula2:1})]

\begin{align*}
\sum_{i=1}^{N-1}\mathcal B_i^{(2)}&=\sum_{i=1}^{N-1}\frac{\rho^i-\rho^{-i}}{d} \\
&=\frac{1}{d}(\sum_{i=1}^{N-1}\rho^i - \sum_{i=1}^{N-1}\rho^{-i}) \\
&=\frac{1}{d}\left( \frac{\rho(1-\rho^{N-1})}{1-\rho} - \frac{\rho^{-1}(1-\rho^{-(N-1)})}{1-\rho^{-1}} \right) \\
&=\frac{1}{d}\left( \frac{\rho(1-\rho^{-1})(1-\rho^{N-1})}{(1-\rho)(1-\rho^{-1})} - \frac{\rho^{-1}(1-\rho)(1-\rho^{-(N-1)})}{(1-\rho)(1-\rho^{-1})} \right) \\
&=\frac{1}{d(1-\rho)(1-\rho^{-1})}\left( (\rho-1)(1-\rho^{N-1}) - (\rho^{-1}-1)(1-\rho^{-(N-1)}) \right) \\
&=\frac{1}{d(2-\rho-\rho^{-1})}\left( \rho(1-\rho^{N-1})+\rho^{N-1} - \rho^{-1}(1-\rho^{-(N-1)})-\rho^{-(N-1)} \right) \\
&=\frac{1}{2-\tau}\left( \frac{\rho-\rho^{-1}}{d} -\frac{\rho^N-\rho^{-N}}{d}+\frac{\rho^{N-1}-\rho^{-(N-1)}}{d} \right) \\
&=\frac{\mathcal B_1^{(2)}-\mathcal B_N^{(2)}+\mathcal B_{N-1}^{(2)}}{2-\tau}.
\end{align*}

\item[(\ref{formula2:2})]

\begin{align*}
\sum_{i=1}^{N-1}\mathcal P_i^{(2)}&=\sum_{i=1}^{N-1}(\rho^i+\rho^{-i}) \\
&=\sum_{i=1}^{N-1}\rho^i + \sum_{i=1}^{N-1}\rho^{-i} \\
&=\frac{\rho(1-\rho^{N-1})}{1-\rho} + \frac{\rho^{-1}(1-\rho^{-(N-1)})}{1-\rho^{-1}} \\
&=\frac{\rho(1-\rho^{-1})(1-\rho^{N-1})}{(1-\rho)(1-\rho^{-1})} + \frac{\rho^{-1}(1-\rho)(1-\rho^{-(N-1)})}{(1-\rho)(1-\rho^{-1})} \\
&=\frac{1}{(1-\rho)(1-\rho^{-1})}\left( (\rho-1)(1-\rho^{N-1}) + (\rho^{-1}-1)(1-\rho^{-(N-1)}) \right) \\
&=\frac{1}{2-\rho-\rho^{-1}}\left( \rho(1-\rho^{N-1})+\rho^{N-1} + \rho^{-1}(1-\rho^{-(N-1)})+\rho^{-(N-1)} -2\right) \\
&=\frac{1}{2-\tau}\left( \rho+\rho^{-1} -(\rho^N+\rho^{-N}) +\rho^{N-1}+\rho^{-(N-1)} -2\right) \\
&=\frac{\mathcal P_1^{(2)}-\mathcal P_N^{(2)}+\mathcal P_{N-1}^{(2)}-2}{2-\tau}.
\end{align*}

\item[(\ref{formula2:3})]

\begin{align*}
\sum_{i=1}^{N-1}i\mathcal B_i^{(2)}&=\sum_{i=1}^{N-1}i\frac{\rho^i-\rho^{-i}}{d} \\
&=\frac{1}{d}\left( \sum_{i=1}^{N-1}i\rho^i-\sum_{i=1}^{N-1}i\rho^{-i} \right) \\
&=\frac{1}{d}\left( \frac{1}{1-\rho}\left( \sum_{i=1}^N\rho^i-N\rho^N \right)-\frac{1}{1-\rho^{-1}}\left( \sum_{i=1}^N\rho^{-i}-N\rho^{-N} \right) \right) \quad \text{(by (\ref{formula2:rho}),(\ref{formula2:rho2}))}\\
&=\frac{1}{d}\left( \frac{1}{1-\rho}\left( \frac{\rho(1-\rho^N)}{1-\rho}-N\rho^N \right)-\frac{1}{1-\rho^{-1}}\left( \frac{\rho^{-1}(1-\rho^{-N})}{1-\rho^{-1}}-N\rho^{-N} \right) \right) \\
&=\frac{1}{d}\left( \frac{\rho(1-\rho^N)}{(1-\rho)^2}-\frac{N\rho^N}{1-\rho}
-\frac{\rho^{-1}(1-\rho^{-N})}{(1-\rho^{-1})^2}+\frac{N\rho^{-N}}{1-\rho^{-1}} \right) \\
&=\frac{1}{d}\left( \frac{\rho(1-\rho^{-1})^2(1-\rho^N)-\rho^{-1}(1-\rho)^2(1-\rho^{-N})}{(1-\rho)^2(1-\rho^{-1})^2}
-\frac{N(1-\rho^{-1})\rho^N-N(1-\rho)\rho^{-N}}{(1-\rho)(1-\rho^{-1})} \right) \\
&=\frac{1}{d}\times\left( \frac{-(2-\rho-\rho^{-1})(1-\rho^N)+(2-\rho-\rho^{-1})(1-\rho^{-N})}{(2-\rho-\rho^{-1})^2} \right.\\
&~\left. -\frac{N(\rho-1)\rho^{N-1}-N(\rho^{-1}-1)\rho^{-(N-1)}}{2-\rho-\rho^{-1}} \right) \\
&=\frac{1}{d}\left( \frac{-(1-\rho^N)+(1-\rho^{-N})}{2-\rho-\rho^{-1}}-\frac{N(\rho-1)\rho^{N-1}-N(\rho^{-1}-1)\rho^{-(N-1)}}{2-\rho-\rho^{-1}} \right) \\
&=\frac{1}{2-\tau}\left( \frac{\rho^N-\rho^{-N}}{d}-N\left(\frac{\rho^N-\rho^{-N}}{d}-\frac{\rho^{N-1}-\rho^{-(N-1)}}{d}\right) \right) \\
&=\frac{\mathcal B_N^{(2)}-N(\mathcal B_N^{(2)}-\mathcal B_{N-1}^{(2)})}{2-\tau}.
\end{align*}

\item[(\ref{formula2:4})]

\begin{align*}
\sum_{i=1}^{N-1}i\mathcal P_i^{(2)}&=\sum_{i=1}^{N-1}i(\rho^i+\rho^{-i}) \\
&=\sum_{i=1}^{N-1}i\rho^i+\sum_{i=1}^{N-1}i\rho^{-i} \\
&=\frac{1}{1-\rho}\left( \sum_{i=1}^N\rho^i-N\rho^N \right)
+\frac{1}{1-\rho^{-1}}\left( \sum_{i=1}^N\rho^{-i}-N\rho^{-N} \right) \quad \text{(by (\ref{formula2:rho}),(\ref{formula2:rho2}))}\\
&=\frac{1}{1-\rho}\left( \frac{\rho(1-\rho^N)}{1-\rho}-N\rho^N \right)
+\frac{1}{1-\rho^{-1}}\left( \frac{\rho^{-1}(1-\rho^{-N})}{1-\rho^{-1}}-N\rho^{-N} \right) \\
&=\frac{\rho(1-\rho^N)}{(1-\rho)^2}-\frac{N\rho^N}{1-\rho}
+\frac{\rho^{-1}(1-\rho^{-N})}{(1-\rho^{-1})^2}-\frac{N\rho^{-N}}{1-\rho^{-1}} \\
&=\frac{\rho(1-\rho^{-1})^2(1-\rho^N)+\rho^{-1}(1-\rho)^2(1-\rho^{-N})}{(1-\rho)^2(1-\rho^{-1})^2}
-\frac{N(1-\rho^{-1})\rho^N+N(1-\rho)\rho^{-N}}{(1-\rho)(1-\rho^{-1})} \\
&=\frac{-(2-\rho-\rho^{-1})(1-\rho^N)-(2-\rho-\rho^{-1})(1-\rho^{-N})}{(2-\rho-\rho^{-1})^2} \\
&~+\frac{N(1-\rho)\rho^{N-1}+N(1-\rho^{-1})\rho^{-(N-1)}}{2-\rho-\rho^{-1}} \\
&=\frac{\rho^N+\rho^{-N}-N(\rho^N+\rho^{-N}-(\rho^{N-1}+\rho^{-(N-1)}))-2}{2-\tau} \\
&=\frac{\mathcal P_N^{(2)}-N(\mathcal P_N^{(2)}-\mathcal P_{N-1}^{(2)})-2}{2-\tau}.
\end{align*}

\item[(\ref{formula2:5})]

From the definition of $C_N$, it holds
\begin{align*}
C_N\left(\mathcal P_N^{(2)}-2\right)+\mathcal B_N^{(2)}
&=\frac{\left(2+\mathcal P_N^{(2)}+d\mathcal B_N^{(2)}\right)\left(\mathcal P_N^{(2)}-2\right)}{d\left(2-\mathcal P_N^{(2)}-d\mathcal B_N^{(2)}\right)}+\mathcal B_N^{(2)} \\
&=\frac{\left(2+\mathcal P_N^{(2)}+d\mathcal B_N^{(2)}\right)\left(\mathcal P_N^{(2)}-2\right)+d\mathcal B_N^{(2)}\left(2-\mathcal P_N^{(2)}-d\mathcal B_N^{(2)}\right)}{d\left(2-\mathcal P_N^{(2)}-d\mathcal B_N^{(2)}\right)}\\
&=\frac{\left(\mathcal P_N^{(2)}\right)^2-4-d^2\left(\mathcal B_N^{(2)}\right)^2}{d\left(2-\mathcal P_N^{(2)}-d\mathcal B_N^{(2)}\right)} \\
&=\frac{\left(\rho^N+\rho^{-N}\right)^2-4-d^2\left(\frac{\rho^N-\rho^{-N}}{d}\right)^2}{d\left(2-\mathcal P_N^{(2)}-d\mathcal B_N^{(2)}\right)} \\
&=\frac{\left(\rho^N+\rho^{-N}\right)^2-4-\left(\rho^N-\rho^{-N}\right)^2}{d\left(2-\mathcal P_N^{(2)}-d\mathcal B_N^{(2)}\right)}\\
&=0.
\end{align*}

\end{itemize}

\end{proof}

From Theorem \ref{thm:r2-chair}, we can obtain the following closed formula for the Kirchhoff index.

\begin{theorem}\label{thm:kirchhoff-r2}
\[
\Kf(G_{N,2})=\frac{2N^2}{\sqrt{4N-25}}\Im(C_N)-1.
\]
\end{theorem}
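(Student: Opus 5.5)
The plan is to mirror the proof of Theorem \ref{thm:kirchhoff-r1}, but with a shortcut that exploits vertex-transitivity. Since $R(x,y)=R(0,d(x,y))$, we have
\[
\Kf(G_{N,2})=\frac12\sum_{x,y}R(x,y)=\frac N2\sum_{\ell=1}^{N-1}R(0,\ell).
\]
By Lemma \ref{lem:r2-reduction}, $R(0,\ell)=\frac4s\Im S_\ell(\tau)$, where $S_\ell(\tau)$ is given in closed form by Lemma \ref{lem:S-evaluation}. The proof of Lemma \ref{lem:S-evaluation} never uses $\ell\le N/2-1$, and its expression is symmetric under $\ell\leftrightarrow N-\ell$, so the formula is valid for every $1\le\ell\le N-1$. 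Hence
\[
\Kf(G_{N,2})=\frac{2N}{s}\,\Im\sum_{\ell=1}^{N-1}S_\ell(\tau),
\]
and the theorem reduces to the identity
\[
\Im\sum_{\ell=1}^{N-1}S_\ell(\tau)=N\,\Im(C_N)-\frac{s}{2N}.
\]

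Next I will sum the closed form of Theorem \ref{thm:r2-chair}, namely $S_\ell=C_N-\frac{C_N}2\mathcal P_\ell^{(2)}-\frac12\mathcal B_\ell^{(2)}$, over $\ell$. Items (\ref{formula2:1}) and (\ref{formula2:2}) of Lemma \ref{lem:PB42} supply $\sum\mathcal B_i^{(2)}$ and $\sum\mathcal P_i^{(2)}$. This gives
\[
\sum_{\ell=1}^{N-1}S_\ell=(N-1)C_N-\frac{C_N\bigl(\mathcal P_1^{(2)}-\mathcal P_N^{(2)}+\mathcal P_{N-1}^{(2)}-2\bigr)+\mathcal B_1^{(2)}-\mathcal B_N^{(2)}+\mathcal B_{N-1}^{(2)}}{2(2-\tau)}.
\]
Item (\ref{formula2:5}) removes the pair $C_N\mathcal P_N^{(2)}+\mathcal B_N^{(2)}$ in favour of $2C_N$. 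The expected analogous relation at index $N-1$, namely $C_N(\mathcal P_{N-1}^{(2)}-\tau)+\mathcal B_{N-1}^{(2)}-1=0$, should remove the $(N-1)$-terms. I will verify this relation directly in terms of $\rho$: writing $C_N=(1+\rho^N)/(d(1-\rho^N))$, it collapses exactly as item (\ref{formula2:5}) does. What remains is $(N-1)C_N$ plus a correction built only from $C_N$, $\tau$ and $d$, which I expect to simplify to $NC_N-\dfrac{\text{const}}{2-\tau}$.

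If that bookkeeping becomes messy, I will work directly from Lemma \ref{lem:S-evaluation} instead:
\[
\sum_{\ell=1}^{N-1}\bigl(1+\rho^N-\rho^\ell-\rho^{N-\ell}\bigr)=(N-1)(1+\rho^N)-\frac{2(\rho-\rho^N)}{1-\rho}.
\]
Dividing by $d(1-\rho^N)$ and using $d\,C_N=(1+\rho^N)/(1-\rho^N)$ yields $NC_N$ minus a term involving $(1-\rho)^{-1}$. Combining with $(1-\rho)(1-\rho^{-1})=2-\tau$ should turn that term into a purely $N$-free rational expression in $\tau$.

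The final and main obstacle is taking imaginary parts of this leftover term. The key facts are $2-\tau=(5-is)/2$ and $|2-\tau|^2=(25+s^2)/4=N$; this is exactly where the factor $1/N$ and the constant $-1$ in $\Kf$ come from. I expect the leftover term to equal $\frac{1}{2-\tau}$ or a real multiple of it. Its imaginary part is $\Im\frac{1}{2-\tau}=\frac{\Im\overline{(2-\tau)}}{N}=\frac{s}{2N}$, which gives $\frac{2N}{s}\cdot\bigl(-\frac{s}{2N}\bigr)=-1$. The delicate point is tracking signs, and checking that no stray real factor (such as $d$ or $C_N$) survives in front of $\frac{1}{2-\tau}$. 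I will check the final identity numerically for $N=7$, where $G_7^{(2)}$ is the $7$-cycle of step $3$ with $\Kf=\frac{N^3-N}{12}=28$, to confirm the constant.
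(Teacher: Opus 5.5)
Your proposal is correct, and it takes a shorter route than the paper.

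\textbf{How the paper proceeds.} It writes $\Kf=\sum_{i=1}^{N-1}(N-i)R_{N,2}(i)$. It therefore needs both the plain sums and the weighted sums $\sum i\mathcal P_i^{(2)}$, $\sum i\mathcal B_i^{(2)}$, which accounts for most of Lemma \ref{lem:PB42}. The index-$(N-1)$ terms cancel automatically between the plain and weighted sums. The only extra relation used is $C_N(\mathcal P_N^{(2)}-2)+\mathcal B_N^{(2)}=0$, after which the paper reaches $\Im\bigl(N^2C_N-\frac{N}{2-\tau}\bigr)$.

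\textbf{What your route buys and costs.} You use $R(0,\ell)=R(0,N-\ell)$ to write $\Kf=\frac N2\sum_\ell R(0,\ell)$, so only geometric sums appear. The price is the extra relation at index $N-1$. It does hold: $C_N\mathcal P_\ell^{(2)}+\mathcal B_\ell^{(2)}=C_N\mathcal P_{N-\ell}^{(2)}+\mathcal B_{N-\ell}^{(2)}$ is just $S_\ell=S_{N-\ell}$ rewritten. Your relation is its $\ell=1$ case, and the paper's relation is its $\ell=0$ case. With both relations, the numerator in your first route becomes $2-2C_N(2-\tau)$, so
\[
\sum_{\ell=1}^{N-1}S_\ell=NC_N-\frac{1}{2-\tau}.
\]

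\textbf{Your fallback route.} It is cleaner still. The identity $2(\rho-\rho^N)=(1+\rho)(1-\rho^N)-(1-\rho)(1+\rho^N)$ gives directly
\[
\sum_\ell S_\ell=NC_N-\frac{1+\rho}{d(1-\rho)}.
\]
Since $d(1-\rho)=-(1-\rho)^2(1+\rho)/\rho$ and $(1-\rho)^2/\rho=\tau-2$, the leftover is exactly $\frac{1}{2-\tau}$ with coefficient $-1$. No stray factor of $d$ or $C_N$ survives, so your computation $\frac{2N}{s}\cdot\bigl(-\frac{s}{2N}\bigr)=-1$ gives the right constant.

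\textbf{Further points in your favour.} You explicitly justify applying Lemma \ref{lem:S-evaluation} for all $1\le\ell\le N-1$. The paper does this only tacitly, since Theorem \ref{thm:r2-chair} is stated just for $\ell\le N/2-1$. Your sanity check is also right: $G_7^{(2)}$ is a $7$-cycle, so $\Kf=28$. The formula agrees, since $\Im C_7=29\sqrt3/98\approx0.5125$.
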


\begin{proof}

\begin{align*}
\Kf(G_{N,2})&=\frac{1}{2}\sum_{x,y\in V}R(x,y) \\
&=\sum_{i=1}^{N-1}(N-i)R_{N,2}(i) \\
&=\sum_{i=1}^{N-1}(N-i)\frac{4}{\sqrt{4N-25}}
\Im\left(
C_N - \frac{C_N}{2}\mathcal P_i^{(2)} - \frac12\mathcal B_i^{(2)}
\right)\\
&=\frac{4}{\sqrt{4N-25}}\times \\
&~\Im\left(NC_N\sum_{i=1}^{N-1}1 - \frac{NC_N}{2}\sum_{i=1}^{N-1}\mathcal P_i^{(2)} - \frac{N}{2}\sum_{i=1}^{N-1}\mathcal B_i^{(2)}
-C_N\sum_{i=1}^{N-1}i + \frac{C_N}{2}\sum_{i=1}^{N-1}i\mathcal P_i^{(2)} + \frac12\sum_{i=1}^{N-1}i\mathcal B_i^{(2)}
\right)\\
&=\frac{4}{\sqrt{4N-25}}\Im\left( N(N-1)C_N - \frac{NC_N}{2}\frac{\mathcal P_1^{(2)}-\mathcal P_N^{(2)}+\mathcal P_{N-1}^{(2)}-2}{2-\tau} \right. \\
&~- \frac{N}{2}\frac{\mathcal B_1^{(2)}-\mathcal B_N^{(2)}+\mathcal B_{N-1}^{(2)}}{2-\tau}-\frac{N(N-1)C_N}{2} + \frac{C_N}{2}\frac{\mathcal P_N^{(2)}-N(\mathcal P_N^{(2)}-\mathcal P_{N-1}^{(2)})-2}{2-\tau} \\
&~\left.+ \frac12\frac{\mathcal B_N^{(2)}-N(\mathcal B_N^{(2)}-\mathcal B_{N-1}^{(2)})}{2-\tau} \right) \quad \text{(by Lemma \ref{lem:PB42} (\ref{formula2:1})--(\ref{formula2:4})}\\
&=\frac{4}{\sqrt{4N-25}}
\Im \left( \frac{C_N}{2(2-\tau)}\left(-N\mathcal P_1^{(2)}+N\mathcal P_N^{(2)}-N\mathcal P_{N-1}^{(2)}+2N+\mathcal P_N^{(2)}-N(\mathcal P_N^{(2)}-\mathcal P_{N-1}^{(2)})-2 \right) \right.\\
&~\left.+\frac{N(N-1)C_N}{2} 
+ \frac{1}{2(2-\tau)}\left(-N\mathcal B_1^{(2)}+N\mathcal B_N^{(2)}-N\mathcal B_{N-1}^{(2)}+\mathcal B_N^{(2)}-N(\mathcal B_N^{(2)}-\mathcal B_{N-1}^{(2)})\right) \right)\\
&=\frac{4}{\sqrt{4N-25}}
\Im \left( \frac{C_N\left(-N\mathcal P_1^{(2)}+\mathcal P_N^{(2)}+2N-2 \right)}{2(2-\tau)}+\frac{N(N-1)(2-\tau)C_N}{2(2-\tau)} 
+ \frac{-N\mathcal B_1^{(2)}+\mathcal B_N^{(2)}}{2(2-\tau)} \right)\\
&=\frac{2}{\sqrt{4N-25}}
\Im \left(\frac{C_N\left(-N^2\mathcal P_1^{(2)}+\mathcal P_N^{(2)}+2N^2-2 \right)-N\mathcal B_1^{(2)}+\mathcal B_N^{(2)}}{2-\tau}\right) \\
&=\frac{2}{\sqrt{4N-25}}\Im \left(\frac{C_N\left(N^2(2-\tau)+\mathcal P_N^{(2)}-2 \right)-N+\mathcal B_N^{(2)}}{2-\tau} \right)\\
&=\frac{2}{\sqrt{4N-25}}\Im \left( N^2C_N+\frac{C_N\left(\mathcal P_N^{(2)}-2\right)+\mathcal B_N^{(2)}-N}{2-\tau}\right) \\
&=\frac{2}{\sqrt{4N-25}}\Im \left( N^2C_N-\frac{N}{2-\tau}\right) \quad \text{(by Lemma \ref{lem:PB42} (\ref{formula2:5}))}\\
&=\frac{2}{\sqrt{4N-25}}\Im \left(N^2C_N-\frac{2N}{5-i\sqrt{4N-25}}\right) \\
&=\frac{2}{\sqrt{4N-25}}\Im \left(N^2C_N-\frac{5+i\sqrt{4N-25}}{2}\right) \\
&=\frac{2N^2}{\sqrt{4N-25}}\Im \left(C_N-\frac{5+i\sqrt{4N-25}}{2N^2}\right) \\
&=\frac{2N^2}{\sqrt{4N-25}}\Im(C_N)-1.
\end{align*}

\end{proof}

\subsubsection{Spanning Tree}

We give the following formula for the number of the spanning tree in terms of the generalized Pisa number.

\begin{theorem} \label{thm:ST2}
The number of the spanning tree of $G_N^{(2)}$ is
\[
t(G_N^{(2)})=\frac{|\mathcal P_N^{(2)}-2|^2}{N^2}.
\]
\end{theorem}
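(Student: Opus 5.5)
Start from Equation \eqref{eq:mt} with $r=2$:
\[
t(G_N^{(2)})=\frac1N\prod_{m=1}^{N-1}\bigl(t_m^2+t_m+(N-6)\bigr),\qquad t_m=2\cos\theta_m .
\]
Factor each eigenvalue over the complex roots $\tau,\overline\tau$ of $t^2+t+(N-6)$, exactly as in Lemma~\ref{lem:r2-reduction}:
\[
t_m^2+t_m+(N-6)=(t_m-\tau)(t_m-\overline\tau).
\]
Since every $t_m$ is real, $t_m-\overline\tau=\overline{t_m-\tau}$. It follows that
\[
\prod_{m=1}^{N-1}\lambda_m=\Bigl|\prod_{m=1}^{N-1}(t_m-\tau)\Bigr|^2 .
\]
So it suffices to show that $\prod_{m=1}^{N-1}(t_m-\tau)=\pm(\mathcal P_N^{(2)}-2)/\sqrt N$ in modulus. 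More precisely, I aim to prove
\[
\Bigl|\prod_{m=1}^{N-1}(t_m-\tau)\Bigr|=\frac{|\mathcal P_N^{(2)}-2|}{|2-\tau|}.
\]

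For this I use $\rho$ from Lemma~\ref{lem:S-evaluation}, with $\rho+\rho^{-1}=\tau$. Writing $z_m=\omega^m$, we get
\[
t_m-\tau=z_m^{-1}(z_m-\rho)(z_m-\rho^{-1}).
\]
Over all $m=0,\dots,N-1$ the identity $\prod_m(z-\omega^m)=z^N-1$ gives $\prod_m(\rho-z_m)=\rho^N-1$, and the analogous product for $\rho^{-1}$. Also $\prod_m z_m^{-1}=1$ because $N$ is odd. Combining these,
\[
\prod_{m=0}^{N-1}(t_m-\tau)=(\rho^N-1)(\rho^{-N}-1)=2-\rho^N-\rho^{-N}=2-\mathcal P_N^{(2)}.
\]
Dividing by the $m=0$ factor $t_0-\tau=2-\tau$ gives
\[
\prod_{m=1}^{N-1}(t_m-\tau)=\frac{2-\mathcal P_N^{(2)}}{2-\tau}.
\]
This is the same manipulation used in Theorem~\ref{thm:ST}.

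Taking squared moduli, I obtain
\[
t(G_N^{(2)})=\frac{|\mathcal P_N^{(2)}-2|^2}{N\,|2-\tau|^2}.
\]
The remaining step, and the one to check carefully, is $|2-\tau|^2=N$. With $2-\tau=(5-is)/2$ and $s^2=4N-25$,
\[
|2-\tau|^2=\frac{25+4N-25}{4}=N .
\]
This also agrees with the fact that $2-\tau$ is the value of the quadratic factor at $t=2$, since $(2-\tau)(2-\overline\tau)=4+2+N-6=N$. Substituting yields $t(G_N^{(2)})=|\mathcal P_N^{(2)}-2|^2/N^2$.

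The main obstacle is bookkeeping rather than any conceptual step. One must confirm that the sign and orientation in $\prod_m(\rho-z_m)$ and $\prod_m(\rho^{-1}-z_m)$ combine to $(\rho^N-1)(\rho^{-N}-1)$. One must also confirm that $\prod_m z_m=\omega^{N(N-1)/2}=1$ for odd $N$. These signs are immaterial for the final answer in any case, because only the modulus is taken.
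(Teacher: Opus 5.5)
Your proposal is correct and follows essentially the same route as the paper. Both factor each eigenvalue as $(t_m-\tau)(t_m-\overline{\tau})$, evaluate $\prod_{m=1}^{N-1}(t_m-\tau)=(\mathcal P_N^{(2)}-2)/(\tau-2)$ using $\rho+\rho^{-1}=\tau$ and the root-of-unity product, and finish with $|\tau-2|^2=N$. The differences are cosmetic: you derive the product identity explicitly where the paper only asserts it (by analogy with Theorem~\ref{thm:ST}), and you handle the conjugate factor via $t_m-\overline{\tau}=\overline{t_m-\tau}$ instead of a separate computation with $\overline{\rho}$.
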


\begin{proof}

Substituting $r=2$ into Equation \eqref{eq:mt} yields
\[
t(G_N^{(2)})=\frac{1}{N}\prod_{i=1}^{N-1}\left( N-4+2
\cos\left(\frac{2\pi i}{N}\right)
+2\cos\left(\frac{4\pi i}{N}\right)\right).
\]

Furthermore,
\begin{align*}
&\prod_{i=1}^{N-1}\left( N-4+2
\cos\left(\frac{2\pi i}{N}\right)
+2\cos\left(\frac{4\pi i}{N}\right)\right) \\
&=\prod_{i=1}^{N-1}\left( N-6+2
\cos\left(\frac{2\pi i}{N}\right)
+4\cos^2\left(\frac{2\pi i}{N}\right)\right) \\
&=(-1)^{N-1}\prod_{i=1}^{N-1}\left(\tau-2
\cos\left(\frac{2\pi i}{N}\right) \right)
(-1)^{N-1}\prod_{i=1}^{N-1}\left(\overline{\tau}-2
\cos\left(\frac{2\pi i}{N}\right) \right)\\
&=\prod_{i=1}^{N-1}\left(\tau-2
\cos\left(\frac{2\pi i}{N}\right) \right)
\prod_{i=1}^{N-1}\left(\overline{\tau}-2
\cos\left(\frac{2\pi i}{N}\right) \right)\\
&=\frac{(\rho^N+\rho^{-N}-2)(\overline{\rho}^N+\overline{\rho}^{-N}-2)}{(\rho+\rho^{-1}-2)(\overline{\rho}+\overline{\rho}^{-1}-2)} \\
&=\frac{(\mathcal P_N^{(2)}-2)(\overline{\mathcal P_N^{(2)}}-2)}{(\tau-2)(\overline{\tau}-2)} \\
&=\frac{|\mathcal P_N^{(2)}-2|^2}{N}.
\end{align*}
Therefore, we obtain
\[
t(G_N^{(1)})=\frac{1}{N}\prod_{i=1}^{N-1}\left( N-4+2
\cos\left(\frac{2\pi i}{N}\right)
+2\cos\left(\frac{4\pi i}{N}\right)\right)=\frac{|\mathcal P_N^{(2)}-2|^2}{N^2}.
\]
\end{proof}

From Theorem \ref{Kir} and Theorem \ref{thm:ST2}, we obtain the following corollary.

\begin{corollary}
\[
t(G_N^{(2)};0,\ell)=\frac{4|\mathcal P_N^{(2)}-2|^2}{N^2\sqrt{4N-25}}
\Im\left(
C_N
-
\frac{C_N}{2}\mathcal P_\ell^{(2)}
-
\frac12\mathcal B_\ell^{(2)}
\right).
\]
\end{corollary}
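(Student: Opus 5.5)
This corollary follows by combining Theorem \ref{Kir} with Theorem \ref{thm:ST2} and Theorem \ref{thm:r2-chair}. Kirchhoff's relation $t(G;x,y)=R(x,y)\cdot t(G)$ holds for any connected graph. $G_N^{(2)}$ is connected for odd $N\ge 7$: its edges at circular distance $(N-1)/2$ already form a Hamiltonian cycle, since $\gcd((N-1)/2,N)=1$. So we may take $x=0$, $y=\ell$ and write $t(G_N^{(2)};0,\ell)=R_{N,2}(\ell)\,t(G_N^{(2)})$.

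We then substitute the two closed forms. Theorem \ref{thm:ST2} gives $t(G_N^{(2)})=|\mathcal P_N^{(2)}-2|^2/N^2$, and Theorem \ref{thm:r2-chair} gives $R_{N,2}(\ell)=\frac{4}{\sqrt{4N-25}}\Im\bigl(C_N-\frac{C_N}{2}\mathcal P_\ell^{(2)}-\frac12\mathcal B_\ell^{(2)}\bigr)$. The factor $|\mathcal P_N^{(2)}-2|^2/N^2$ is a nonnegative real number, so it can be moved inside or outside $\Im(\cdot)$ freely. Multiplying the two expressions gives exactly the stated formula.

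The only point needing care is that the two cited results really apply for the range of $\ell$ in question. Theorem \ref{thm:r2-chair} is stated for $1\le \ell\le N/2-1$, whereas one would want all $1\le \ell\le (N-1)/2$. I would check that the proof of Theorem \ref{thm:r2-chair}, through Lemma \ref{lem:S-evaluation}, uses Lemma \ref{lem:root-of-unity} only with exponents $k\in\{0,\ell,N-\ell\}\subset\{0,\dots,N-1\}$. That holds for every $1\le\ell\le N-1$, so the formula extends. No further computation is needed, and I expect no real obstacle beyond this bookkeeping of hypotheses.
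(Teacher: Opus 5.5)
Your proposal is correct and follows the paper's proof: it writes $t(G_N^{(2)};0,\ell)=R_{N,2}(\ell)\cdot t(G_N^{(2)})$ via Kirchhoff's theorem and substitutes Theorem \ref{thm:r2-chair} and Theorem \ref{thm:ST2}. Your two extra checks are valid bookkeeping that the paper leaves implicit. The first is connectivity, via the Hamiltonian cycle of distance-$(N-1)/2$ edges. The second is that the argument behind Theorem \ref{thm:r2-chair} only uses exponents $k\in\{0,\ell,N-\ell\}$, so the closed form also covers $\ell=(N-1)/2$.
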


\begin{proof}
\begin{align*}
t(G_N^{(2)};0,\ell)&=R_{N,2}(\ell) \cdot t(G_N^{(2)}) \\
&=\frac{4}{\sqrt{4N-25}}
\Im\left(
C_N
-
\frac{C_N}{2}\mathcal P_\ell^{(2)}
-
\frac12\mathcal B_\ell^{(2)}
\right)\cdot \frac{|\mathcal P_N^{(2)}-2|^2}{N^2} \\
&=\frac{4|\mathcal P_N^{(2)}-2|^2}{N^2\sqrt{4N-25}}
\Im\left(
C_N
-
\frac{C_N}{2}\mathcal P_\ell^{(2)}
-
\frac12\mathcal B_\ell^{(2)}
\right).
\end{align*}
\end{proof}

\subsection{Numerical Verification}

Finally, we verify the numerical agreement between the closed form obtained in Theorem \ref{thm:r2-chair} and the finite sum representation given by Theorem \ref{mainthm1}.

For example, by computing the values for $N=7$, we obtain the following table.
\[
\begin{array}{c|c|c|c}
\ell
&
\text{Finite Sum Representation}
&
\text{Closed Form}
&
\text{Deviation}
\\ \hline
1 & 0.7777777778 & 0.7777777778 & <10^{-12} \\
2 & 0.8888888889 & 0.8888888889 & <10^{-12} \\
3 & 0.7777777778 & 0.7777777778 & <10^{-12}
\end{array}
\]

Similar agreement is verified for other odd integers $N$.
This numerically confirms that the generalized Bejaia--Pisa-type closed form derived for $r=2$ coincides with the finite sum representation.

\section{Conclusion}

In this paper, we studied the effective resistance of the graph $G_N^{(r)}$ obtained by deleting the edges corresponding to the cyclic distances $\{\pm1,\pm2,\dots,\pm r\}$ from the complete graph $K_N$.
Using the cyclic symmetry of the graph to diagonalize the Laplacian matrix via the discrete Fourier basis, we derived a finite trigonometric sum representation for the effective resistance between two vertices at distance $\ell$.
Furthermore, we obtained an explicit formula for the average hitting time.

After that, we analyzed the cases $r=1$ and $r=2$ in detail.
For $r=1$, if $N$ is odd, the graph $G_N^{(1)}=K_N\setminus C_N$ coincides with the one studied by Chair \cite{Chair}, and the effective resistance is expressed in a closed form in terms of Bejaia and Pisa numbers.
By rederiving this result from the perspective of Fourier analysis and root-of-unity expansions, we showed that these Fibonacci-type sequences naturally arise from the spectral structure of the Laplacian.

The main novelty of this paper lies in the complex sequence structure that arises for $r=2$.
In this case, the denominator of the effective resistance reduces to a quadratic polynomial, and we showed that by using its complex roots, the generalized Bejaia--Pisa-type sequences $\mathcal P_k^{(2)}$ and $\mathcal B_k^{(2)}$, which correspond to the Bejaia and Pisa numbers for $r=1$, are naturally derived.

Specifically, these sequences are described in terms of the complex number $\rho$ determined by $\rho+\rho^{-1}=\frac{-1+i\sqrt{4N-25}}{2}$ and satisfy the complex Fibonacci-type recurrence relation $x_{k+2}=\tau x_{k+1}-x_k$.
Therefore, it has become clear that an algebraic and sequential structure lies behind the effective resistance for the circulant distance deletion of the complete graph.

Specifically, while a real Fibonacci-type structure corresponding to real roots appears for $r=1$, an oscillatory complex Fibonacci-type structure corresponding to complex roots arises for $r=2$.
This suggests that the structure of the effective resistance itself changes significantly depending on the deleted distance classes.

These results suggest the possibility that generalized sequences corresponding to higher-order algebraic equations may arise for more general $r$ as well.
The existence of closed forms for $r \ge3$, the corresponding recurrence structures, and the extension to weighted circulant graphs and other vertex-transitive graphs remain interesting problems for future research.

The results of this paper show that the effective resistance of circulant graphs, obtained by deleting distance classes from the complete graph, is not merely a trigonometric sum but is deeply connected to Fibonacci-type sequences and generalized sequences through spectral decomposition.
This suggests the potential existence of novel relationships among effective resistance, random walks, spectral graph theory, and integer sequences.

\section*{Acknowledgments}

This study was supported by JSPS KAKENHI Grant Number JP25K23339 (to Y. T.). 

\appendix
\section{Even number of vertices} \label{app:evenN}

In this appendix, we briefly discuss the case where the number of vertices $N$ is even.
We set $V=\mathbb Z/N\mathbb Z$ and consider an integer $r$ satisfying $1\le r\le \frac{N}{2}-1$.


Similarly to the odd-vertex case, we define the cyclic distance by
\[
d(x,y)=\min\{|x-y|,\,N-|x-y|\}.
\]
However, when $N$ is even, the maximum value of the distance is $h=N/2$.
In this case, the cardinality of the distance class $D_k=\{x\in V\mid d(0,x)=k\}$ is given by
\[
|D_0|=|D_h|=1,
\qquad
|D_k|=2
\quad
(1\le k\le h-1).
\]

Therefore, only the distance class corresponding to the maximum distance $h=N/2$ consists of a single vertex.
This constitutes the essential difference from the odd-vertex case.


Since the graph $G_N^{(r)}$ is a circulant graph, as in the case of an odd number of vertices, its Laplacian matrix can be diagonalized via the discrete Fourier basis
\[
\varphi_m(x)
=
\frac1{\sqrt N}
e^{2\pi imx/N}
\qquad
(m=0,1,\dots,N-1).
\]

The corresponding eigenvalues are given, similarly to the odd-vertex case, by
\[
\lambda_m
=
(N-2r)
+
2\sum_{j=1}^r
\cos\left(
\frac{2\pi mj}{N}
\right).
\]


Therefore, by using Wu's formula, we obtain the formula for the effective resistance $R(0,\ell)$,
\[
R(0,\ell)
=
\frac{2}{N}
\sum_{m=1}^{N-1}
\frac{
1-\cos\left(\frac{2\pi m\ell}{N}\right)
}{
(N-2r)
+
2\sum_{j=1}^r
\cos\left(
\frac{2\pi mj}{N}
\right)
}.
\]

In the odd-vertex case, utilizing the symmetry $m \longleftrightarrow N-m$
enabled us to reduce the number of terms in the sum by half.

However, in the even-vertex case, the term corresponding to $m=\frac{N}{2}$ is self-symmetric, so we need to treat this term separately.


\begin{theorem}
Let $N$ be an even number and $r$ be an integer such that $1\le r \le \frac N2-1$.

Then, the effective resistance between two vertices at distance $\ell$ is
\[
R(0,\ell)
=
\frac{4}{N}
\sum_{m=1}^{N/2-1}
\frac{
1-\cos\left(\frac{2\pi m\ell}{N}\right)
}{
(N-2r)
+
2\sum_{j=1}^r
\cos\left(
\frac{2\pi mj}{N}
\right)
}
+
\frac{2}{N}
\frac{
1-\cos(\pi\ell)
}{
(N-2r)
+
2\sum_{j=1}^r
\cos(\pi j)
}.
\]
\end{theorem}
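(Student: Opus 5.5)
The plan is to start from the full spectral sum already written down just before the statement (Wu's formula with the eigenvalues of Proposition \ref{prop32}, which the Fourier diagonalization gives verbatim for even $N$), namely
\[
R(0,\ell)=\frac{2}{N}\sum_{m=1}^{N-1}f(m),\qquad
f(m)=\frac{1-\cos\left(\frac{2\pi m\ell}{N}\right)}{(N-2r)+2\sum_{j=1}^r\cos\left(\frac{2\pi mj}{N}\right)}.
\]
We must first make sure every denominator is nonzero for $1\le m\le N-1$. I would argue this as in the odd case. The eigenvalue equals $\deg + $ (something); more precisely, it is the Laplacian eigenvalue of a connected graph with $m\neq 0$. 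Connectivity holds for $r\le N/2-1$ because vertex $0$ is adjacent to $N/2$ and to $N/2\pm1$ (when $r\le N/2-2$). The edge case $r=N/2-1$ leaves only the perfect matching $x\sim x+N/2$ and is disconnected. There I would either note that the formula is to be read where the denominators are nonzero, or restrict to $r\le N/2-2$. I expect this hypothesis check to be the only delicate point, and I would flag it rather than resolve it.

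The main step is the pairing $m\leftrightarrow N-m$. Since $\cos\left(\frac{2\pi (N-m)k}{N}\right)=\cos\left(2\pi k-\frac{2\pi mk}{N}\right)=\cos\left(\frac{2\pi mk}{N}\right)$ for every integer $k$, both numerator and denominator are invariant, so $f(N-m)=f(m)$. For $N$ even, the map $m\mapsto N-m$ on $\{1,\dots,N-1\}$ is an involution whose only fixed point is $m=N/2$. The remaining indices split into the disjoint pairs $\{m,N-m\}$ with $1\le m\le N/2-1$. Hence
\[
\sum_{m=1}^{N-1}f(m)=2\sum_{m=1}^{N/2-1}f(m)+f(N/2).
\]

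Finally, I would evaluate the self-symmetric term. At $m=N/2$ we have $\frac{2\pi m\ell}{N}=\pi\ell$ and $\frac{2\pi mj}{N}=\pi j$, so
\[
f(N/2)=\frac{1-\cos(\pi\ell)}{(N-2r)+2\sum_{j=1}^r\cos(\pi j)}.
\]
Multiplying the decomposition by $2/N$ gives the stated formula. Optionally, one can simplify $\cos(\pi\ell)=(-1)^\ell$ and $2\sum_{j=1}^r(-1)^j\in\{0,-2\}$, but this is not needed for the statement.
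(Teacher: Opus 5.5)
Your proposal is correct and follows the paper's proof: start from the full spectral sum, use $f(N-m)=f(m)$, pair $m$ with $N-m$ for $1\le m\le N/2-1$, and keep the self-paired term $m=N/2$ separately. Your flag about the edge case is also well-founded, and the paper does not address it: for $r=N/2-1$ the graph is the perfect matching $x\sim x+N/2$, so $\lambda_m=1-(-1)^m$ vanishes for every even $m$, and the hypothesis should really be $r\le N/2-2$.
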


\begin{proof}
We consider the formula
\[
R(0,\ell)
=
\frac{2}{N}
\sum_{m=1}^{N-1}
\frac{
1-\cos\left(\frac{2\pi m\ell}{N}\right)
}{
(N-2r)
+
2\sum_{j=1}^r
\cos\left(
\frac{2\pi mj}{N}
\right)
}.
\]

As in the odd-vertex case, the summand is invariant under $m\longleftrightarrow N-m$.
However, in the even-vertex case, $m=\frac{N}{2}$ is self-symmetric, so this term remains alone.

Therefore, by pairing the terms for $m=1,\dots,\frac{N}{2}-1$, we obtain the theorem.
\end{proof}


\begin{remark}
For the even-vertex case, the basic structure of the effective resistance is the same as the odd-vertex case except for the appearance of a correction term corresponding to $m = \frac{N}{2}$. 

Specifically, we expect that the derivation of closed forms for $r=1$ and $r=2$ can be derived similarly by adding this correction term.
\end{remark}


\end{document}